\newtheorem{theorem}{Theorem}[]
\newtheorem{lemma}[theorem]{Lemma}
\newtheorem{proposition}[theorem]{Proposition}
\newtheorem{conjecture}[theorem]{Conjecture}
\newtheorem{problem}[theorem]{Problem}
\newcommand{\ma}{\mathcal}
\newcommand{\mr}{\mathscr}
\newcommand{\s}{\subseteq}
\newcommand{\fr}{\frac}
\newcommand{\lc}{\lceil}
\newcommand{\rc}{\rceil}
\newcommand{\lf}{\lfloor}
\newcommand{\rf}{\rfloor}
\newcommand{\ep}{\epsilon}
\newcommand{\ex}{{\rm{E}}}
\newcommand{\e}{{\rm{ex}}}
\begin{document}
\title{Sparse hypergraphs with applications to coding theory\footnote{Part of this paper has been published in 2019 IEEE International Symposium on Information Theory.}}

\author{Chong Shangguan\footnote{Department of Electrical Engineering-Systems, Tel Aviv University, Tel Aviv 6997801, Israel. Email: theoreming@163.com.} and Itzhak Tamo\footnote{Department of Electrical Engineering-Systems, Tel Aviv University, Tel Aviv 6997801, Israel. Email: zactamo@gmail.com.}}

\date{}

\maketitle

\begin{abstract}
  For fixed integers $r\ge 3,e\ge 3,v\ge r+1$, an $r$-uniform hypergraph is called $\mathscr{G}_r(v,e)$-free if the union of any $e$ distinct edges contains at least $v+1$ vertices.
  Brown, Erd\H{o}s and S\'{o}s showed that the maximum number of edges of such a hypergraph on $n$ vertices, denoted as $f_r(n,v,e)$, satisfies
  $$\Omega(n^{\fr{er-v}{e-1}})=f_r(n,v,e)=O(n^{\lceil\frac{er-v}{e-1}\rceil}).$$
  For sufficiently large $n$ and $e-1\mid er-v$, the lower bound matches the upper bound up to a constant factor, which depends only on $r,v,e$; whereas for $e-1\nmid er-v$, in general it is a notoriously hard problem to determine the correct exponent of $n$.
  Among other results, we improve the above lower bound by showing that
  $$f_r(n,v,e)=\Omega(n^{\frac{er-v}{e-1}}(\log n)^{\frac{1}{e-1}})$$
  for any $r,e,v$ satisfying $\gcd(e-1,er-v)=1$.
  The hypergraph we constructed is in fact $\mathscr{G}_r(ir-\lceil\frac{(i-1)(er-v)}{e-1}\rceil,i)$-free for every $2\le i\le e$, and it has several interesting applications in coding theory.
  The proof of the new lower bound is based on a novel application of the lower bound on the hypergraph independence number due to Duke, Lefmann, and R{\"o}dl.
\end{abstract}

{\it Keywords:} sparse hypergraphs, hypergraph independence number, coding theory

{\it Mathematics subject classifications:} 05C35, 05C65, 05D40, 94B25, 68R05, 68R10

\section{Introduction}

\noindent Since the pioneering work of Tur\'an \cite{turan}, the study of Tur\'an-type problems has been playing a central role in the field of extremal combinatorics. In this work, we present an improved probabilistic lower bound for a hypergraph Tur\'an-type problem introduced by Brown, Erd\H{o}s and S\'os \cite{BES71} in 1973. We also show that this new bound provides improved constructions for several seemingly unrelated problems in coding theory, including Parent-Identifying Set Systems, uniform Combinatorial Batch Codes and optimal Locally Recoverable Codes.

Let us begin with some necessary notation. For an integer $r\ge 2$, an $r$-uniform hypergraph (henceforth an $r$-graph) $\ma{H}:=(V(\ma{H}),E(\ma{H}))$ can be viewed as a pair of vertices and edges, where the vertex set $V(\ma{H})$ is a finite set and the edge set $E(\ma{H})$ is a collection of $r$-subsets of $V(\ma{H})$. An $r$-graph is called {\it $\ma{H}$-free} if it contains no subhypergraph which forms a copy of $\ma{H}$. For a family $\mr{H}$ of $r$-graphs, the {\it Tur\'an number}, $\e_r(n,\mr{H})$, is the maximum number of edges in an $r$-graph on $n$ vertices which is $\ma{H}$-free for every $\ma{H}\in\mr{H}$.

Throughout this paper, an $r$-graph $\ma{H}$ always stands for its edge set $E(\ma{H})$. The vertex set $V(\ma{H})$ is viewed as a subset of $[n]:=\{1,\ldots,n\}$. Given a finite set $X\s[n]$, denote by $\binom{X}{r}$ the family of $\binom{|X|}{r}$ distinct $r$-subsets of $X$. Hence, $\ma{H}=E(\ma{H})\s\binom{[n]}{r}$. We will frequently use the standard Bachmann-Landau notations $\Omega(\cdot),\Theta(\cdot),O(\cdot)$ and $o(\cdot)$, whenever the constants are not important.

For integers $r\ge 2,e\ge 2,v\ge r+1$, let $\mr{G}_r(v,e)$ be the family of all $r$-graphs formed by $e$ edges and at most $v$ vertices; that is,
$$\mr{G}_r(v,e)=\{\ma{H}\s\binom{[v]}{r}:|\ma{H}|=e\}.$$
An $r$-graph $\ma{H}$ is called {\it $\mr{G}_r(v,e)$-free} if it does not contain a copy of any member of $\mr{G}_r(v,e)$, namely, the union of any $e$ distinct edges of $\ma{H}$ contains at least $v+1$ vertices. In the literature, such $r$-graphs are also termed {\it sparse} \cite{sparse}. As in the previous papers (see, e.g. \cite{AlonShapira}), we use the notation $f_r(n,v,e):=\e_r(n,\mr{G}_r(v,e))$.

Since the study of $f_r(n,v,e)$ for $e=2$ or $r=2$ has been quite extensive (see, e.g. \cite{Erdos1964,Erdosr=2,Rodlpacking}), we focus on the asymptotic behavior of $f_r(n,v,e)$ for fixed integers $r\ge 3,e\ge 3,v\ge r+1$ as $n\rightarrow\infty$.
It was shown in \cite{BES71} that in general
\begin{equation}\label{BESbound}
  \begin{aligned}
   \Omega(n^{\fr{er-v}{e-1}})=f_r(n,v,e)=O(n^{\lc\fr{er-v}{e-1}\rc}).
  \end{aligned}
\end{equation}

\noindent The lower bound in (\ref{BESbound}) is obtained by a standard probabilistic method (now known as the alteration method, see, e.g. \cite{alon2016probabilistic}), and the (naivest) upper bound follows from a double counting argument, which uses the simple fact that any set of $\lc\fr{er-v}{e-1}\rc$ vertices can be contained in at most $e-1$ distinct edges.

Observe that the exponent of $n$ in (\ref{BESbound}) is tight for $e-1\mid er-v$; however, for $e-1\nmid er-v$, in general it is a notoriously hard problem to determine the correct order of the exponent of $n$. In particular, for fixed $r>k\ge 2,e\ge 3$ and $v=e(r-k)+k+1$, the study of $f_r(n,e(r-k)+k+1,e)$ as $n\rightarrow\infty$ has attracted considerable attention since the work of \cite{BES71,MR323647}. It is easy to check by (\ref{BESbound}) that
\begin{equation}\label{previouslowerbound}
\begin{aligned}
  \Omega(n^{k-\fr{1}{e-1}})=f_r(n,e(r-k)+k+1,e)=O(n^k).
\end{aligned}
\end{equation}

The following conjecture remains widely open.
\begin{conjecture}[see, \cite{BES71,AlonShapira}]\label{conjecture}
  For fixed integers $r>k\ge 2, e\ge 3$,
  $$n^{k-o(1)}<f_r(n,e(r-k)+k+1,e)=o(n^k)$$ as $n\rightarrow\infty$.
\end{conjecture}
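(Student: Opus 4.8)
\medskip

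\noindent\textbf{A plan towards Conjecture~\ref{conjecture}.} The conjecture is at present wide open, so what follows is a strategy rather than a proof; it is modelled on the one case that is completely settled, $r=3,k=2,e=3$, where $n^{2-o(1)}<f_3(n,6,3)=o(n^2)$ follows by combining Behrend's progression-free sets with the Ruzsa--Szemer\'edi $(6,3)$-theorem. The two inequalities are of a different character, and I would attack them separately: the upper bound $f_r(n,e(r-k)+k+1,e)=o(n^k)$ by a removal-lemma argument, and the lower bound $f_r(n,e(r-k)+k+1,e)>n^{k-o(1)}$ by an explicit additive construction.

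For the upper bound I would follow the proof scheme of the $(6,3)$-theorem. After passing to an $r$-partite version of $\ma{H}$ (losing only a constant factor), I would attach to each edge of a $\mr{G}_r(e(r-k)+k+1,e)$-free $r$-graph $\ma{H}$ a fixed configuration modelled on the complete $k$-uniform hypergraph on $k+1$ vertices (for the $(6,3)$ case, simply a triangle), producing an auxiliary $k$-uniform hypergraph $\ma{G}$ on $O(n)$ vertices. The role of the bound $e(r-k)+k+1$ on the vertex count should be to force $\ma{H}$ to be ``linear'' enough that (i) $\ma{G}$ has no copy of the configuration besides the $t:=|\ma{H}|$ obvious ones, and (ii) these copies are pairwise edge-disjoint, whence $t=O(n^k)$; for $r>k+1$ one must also control the multiplicity with which distinct edges of $\ma{H}$ can project onto the same $(k+1)$-set. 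Then $\ma{G}$ needs at least $t$ edge-deletions to become free of the configuration, so if $t\ge\delta n^k$ the hypergraph removal lemma (Gowers; Nagle--R\"odl--Schacht--Skokan) would force $\Omega_\delta(n^{k+1})$ copies of it, contradicting $t=O(n^k)$; hence $t=o(n^k)$. This template already yields $o(n^k)$ when the threshold $e(r-k)+k+1$ is enlarged slightly (of S\'ark\"ozy--Selkow type); closing the residual, essentially logarithmic, gap --- where the ``linearity'' in (i)--(ii) degrades and one must instead control how often many edges cluster on few vertices without completing a $\mr{G}_r(e(r-k)+k+1,e)$-configuration --- is the real point to be sharpened.

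For the lower bound the plan is additive, generalizing the Ruzsa--Szemer\'edi construction. I would fix an integer $m$, a set $B\s[m]$ with $|B|=m^{1-o(1)}$ containing no nontrivial solution of a suitable homogeneous linear equation (for $e=3$, a Behrend set free of three-term progressions), and a skeleton providing $k-1$ further ``free'' coordinates (none when $k=2$; for larger $k$ one might use a polynomial- or norm-graph-type incidence structure). The vertex set of $\ma{H}$ would be split into $O(r)$ blocks, each a translate of $[O(m)]$; an edge would be indexed by a base point in the product of the $k-1$ skeleton coordinates together with one element $b\in B$, and its $r$ vertices distributed among the blocks by affine rules in the base point and in $b$. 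These rules must be arranged so that \emph{any} $e$ edges spanning at most $e(r-k)+k+1$ vertices are forced to read off a nontrivial solution of the forbidden equation inside $B$, which is impossible; then $|\ma{H}|=m^{k-1}|B|=m^{k-o(1)}=n^{k-o(1)}$.

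The step I expect to be the genuine obstacle is this construction, together with the choice of the linear equation, already for $k=2$ and $e\ge4$: there the best bound known is the $\Omega(n^{2-\fr{1}{e-1}}(\log n)^{\fr{1}{e-1}})$ of the present paper, and upgrading it to $n^{2-o(1)}$ is a well-known open problem that would require a dense subset of $[m]$ avoiding a more intricate additive pattern --- roughly the pattern that $e$ edges impose on the critical number of vertices --- rather than merely three-term progressions. For $k\ge3$ one additionally has to interleave such a Behrend-type coordinate with the algebraic skeleton so that the only loss is the $o(1)$ coming from $B$ while the resulting $r$-graph stays $\mr{G}_r(e(r-k)+k+1,e)$-free for \emph{every} choice of $e$ edges; reconciling these two ingredients is exactly where all current approaches stall, and progress on either half would likely also bear on the long-standing question of whether this Tur\'an-type exponent exists at all.
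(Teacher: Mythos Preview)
The statement you were given is a \emph{conjecture}, and the paper does not prove it; indeed, the paragraph immediately following Conjecture~\ref{conjecture} in the paper says explicitly that it ``remains widely open'' and lists only the partial results that are known (the Ruzsa--Szemer\'edi $(6,3)$-theorem, the upper bound for $r\ge k+1\ge e\ge 3$, and the lower bound for a handful of small parameter choices). The paper's own contribution in this direction is the far more modest Proposition~\ref{presentlowerbound}, which improves the probabilistic lower bound by a factor of $(\log n)^{1/(e-1)}$ --- a long way from $n^{k-o(1)}$.

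So there is no ``paper's proof'' to compare your proposal against. You recognised this yourself, calling your write-up ``a strategy rather than a proof'', and in that spirit it is a perfectly sensible outline: the two-pronged attack via hypergraph removal for the upper bound and a Behrend-type additive construction for the lower bound is exactly the template suggested by the $(6,3)$ case, and you correctly flag that the genuine obstruction lies in the lower-bound construction already for $k=2$, $e\ge 4$ (where nothing better than the polylogarithmic gain of this paper is known for general $e$). Your discussion of the upper-bound side is also honest about the gap between the S\'ark\"ozy--Selkow-type results and the sharp threshold.

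In short: there is no error to point to, because you did not claim to have a proof; but equally there is nothing in the paper to validate your plan against, since the paper leaves Conjecture~\ref{conjecture} open and proves only the weaker Proposition~\ref{presentlowerbound} by an entirely different (probabilistic, non-constructive) method based on the Duke--Lefmann--R\"odl independence-number bound.
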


Conjecture \ref{conjecture} has been studied in depth for more than forty years. For example, the first case of the conjecture, i.e., when $r=3, k=2$ and $e=3$, was already highly nontrivial. It was not solved until Ruzsa and Szemer\'{e}di \cite{Ruzsa-Szemeredi} proved the (6,3)-theorem
$$n^{2-o(1)}<f_3(n,6,3)=o(n^2),$$
where the upper bound follows from the celebrated Regularity Lemma \cite{Szemeredi}, and the lower bound is based on Behrend's construction \cite{Behrend46} on 3-term arithmetic progression free sets. The study of $f_3(n,6,3)$ indicates that the resolution of Conjecture \ref{conjecture} may rely heavily on the regularity lemmas\footnote{which include, for example, the graph regularity lemma and the hypergraph regularity lemma, see, \cite{Conlon-Fox}} and Behrend-type constructions, which are among the most powerful tools in extremal combinatorics. Improvements of (\ref{BESbound}) on sporadic or less general parameters have been obtained in a line of other works \cite{EPR,AlonShapira,other2,other1,Rodl,ge2017sparse}. Currently, the upper bound part of Conjecture \ref{conjecture} is known to be true for all $r\ge k+1\ge e\ge 3$ \cite{ge2017sparse}, and the lower bound part holds for $k>r\ge 2$ \cite{AlonShapira} and $k=2,e\in\{4,5,7,8\}$ \cite{ge2017sparse}.

Despite the efforts of many researchers, the lower bound \eqref{previouslowerbound} implied by (\ref{BESbound}) remains the best possible for $e\ge 4, r>k\ge 3$ and $e\not\in\{3,4,5,7,8\}, r>k=2$. In the proposition below we slightly improve the lower bound of \eqref{previouslowerbound} by a $(\log n)^{\fr{1}{e-1}}$ factor.

\begin{proposition}\label{presentlowerbound}
  For fixed integers $r>k\ge 2, e\ge 3$, 
  $$f_r(n,e(r-k)+k+1,e)=\Omega(n^{k-\fr{1}{e-1}}(\log n)^{\fr{1}{e-1}})$$
  as $n\rightarrow\infty$.
\end{proposition}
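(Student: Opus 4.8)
The plan is to realize the desired $r$-graph as a large independent set in an auxiliary $e$-uniform ``conflict'' hypergraph, and to extract that independent set via the Duke--Lefmann--R\"odl bound for the independence number of uncrowded hypergraphs. Set $m:=er-v=(e-1)k-1$, so that $\gcd(m,e-1)=1$ and $\fr{m}{e-1}=k-\fr1{e-1}$. For $2\le i\le e$ call a family of $i$ distinct $r$-subsets of $[n]$ a \emph{type-$i$ configuration} if the union of these sets has at most $V_i:=ir-\lc\fr{(i-1)m}{e-1}\rc$ vertices; one checks $V_i=ir-(i-1)k$ for $i<e$ and $V_e=er-m=v$, and that $\delta_i:=\lc\fr{(i-1)m}{e-1}\rc-\fr{(i-1)m}{e-1}=\fr{i-1}{e-1}\ge\fr1{e-1}>0$ for $2\le i\le e-1$ (this is the one place $\gcd(m,e-1)=1$ is used). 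An $r$-graph with no type-$i$ configuration for every $2\le i\le e$ is in particular $\mr{G}_r(v,e)$-free, so it suffices to produce one with $\Omega\bigl(n^{m/(e-1)}(\log n)^{1/(e-1)}\bigr)$ edges.

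The first step is a random sparsification: let $\ma{F}\s\bi{[n]}{r}$ contain each $r$-subset independently with probability $p:=n^{-(r-m/(e-1))+\ep_0}$, where $\ep_0>0$ is a sufficiently small constant depending only on $e$, to be fixed at the end. Since $r-V_i=-(i-1)(r-m/(e-1))+\delta_i$, a first-moment estimate gives that for $2\le i\le e-1$ the expected number of type-$i$ configurations contained in $\ma{F}$ is $O(p^i n^{V_i})=n^{-\Omega(1)}\,p\bi{n}{r}$ as long as $\ep_0<\delta_i/(i-1)=\fr1{e-1}$; similarly, for each $2\le s\le e-1$ the expected number of pairs of type-$e$ configurations of $\ma{F}$ sharing $s$ common $r$-sets that span at least $V_s+1$ vertices is $O\bigl(n^{m/(e-1)+\delta_s+(2e-s)\ep_0-1}\bigr)=o\bigl(p\bi{n}{r}\bigr)$ once $\ep_0$ is small enough, and the same bound holds for the other bounded small configurations in the conflict hypergraph whose absence Duke--Lefmann--R\"odl requires. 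Choose $\ep_0>0$ small enough for these finitely many inequalities. By Markov and Chernoff we may assume $|\ma{F}|\ge\fr12 p\bi{n}{r}=:\fr12 N$ while all the above counts are $o(N)$; deleting one $r$-set from every type-$i$ configuration with $i<e$, from every forbidden small configuration of the conflict hypergraph, and (by Markov applied to the number of type-$e$ configurations of $\ma{F}$, which is $O(Nt^{e-1})$ in expectation with $t:=n^{\ep_0}$) from every $r$-set lying in more than $Ct^{e-1}$ type-$e$ configurations for a large constant $C$, we are left with $\ma{F}'$ satisfying $|\ma{F}'|\ge\fr14 N$, containing no type-$i$ configuration for $i<e$, and whose type-$e$ configurations form an uncrowded $e$-graph of maximum degree $O(t^{e-1})$.

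Now let $G$ be the $e$-uniform hypergraph on vertex set $\ma{F}'$ whose edges are exactly the type-$e$ configurations inside $\ma{F}'$. By the previous step $G$ is uncrowded, has $N_0:=|\ma{F}'|\ge\fr14 N$ vertices, and has maximum degree $O(t^{e-1})$. The Duke--Lefmann--R\"odl theorem then yields an independent set of $G$ of size
$$\Omega\!\left(\fr{N_0}{t}(\log t)^{\fr1{e-1}}\right)=\Omega\!\left(\fr{N}{n^{\ep_0}}(\ep_0\log n)^{\fr1{e-1}}\right)=\Omega\!\left(n^{\fr{m}{e-1}}(\log n)^{\fr1{e-1}}\right)=\Omega\!\left(n^{k-\fr1{e-1}}(\log n)^{\fr1{e-1}}\right),$$
using $N=p\bi{n}{r}=\Theta(n^{m/(e-1)+\ep_0})$ and that $\ep_0$ is a positive constant. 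An independent set of $G$ is a subfamily of $\ma{F}'$ with no type-$e$ configuration; since $\ma{F}'$ has no type-$i$ configuration for $i<e$, such a subfamily is an $r$-graph with no type-$i$ configuration for any $2\le i\le e$, hence $\mr{G}_r(v,e)$-free, which proves the proposition (and, via the type-$i$ freeness, gives the stronger property quoted in the abstract).

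The step I expect to be the main obstacle is verifying that the cleaned conflict hypergraph $G$ is genuinely uncrowded (no two edges sharing two or more vertices, and no other short ``cycles'') \emph{while} retaining $|\ma{F}'|=\Omega(n^{m/(e-1)+\ep_0})$ and $t=n^{\ep_0}$ a true power of $n$. This is precisely where coprimality is essential: each $\delta_i\ge\fr1{e-1}$ being a positive constant is what simultaneously supplies the $n^{-\Omega(1)}$ savings that make all the deletions affordable \emph{and} forces $t\to\infty$ polynomially, so that $\log t=\Theta(\log n)$ rather than $\Theta(\log\log n)$ — if $\gcd(m,e-1)>1$ some $\delta_i$ vanishes and the method only yields $t=O(1)$, i.e., no gain. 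Once one checks that a single $\ep_0>0$ can be chosen to satisfy the finitely many constraints above, the remaining ingredients — the first-moment estimates and the invocation of Duke--Lefmann--R\"odl — are routine, and the auxiliary parameter $\ep_0$ cancels in the final bound.
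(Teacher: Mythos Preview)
Your approach is essentially identical to the paper's proof of its main theorem (specialized to $v=e(r-k)+k+1$): sample randomly with $p=n^{-(v-r)/(e-1)+\epsilon}$, delete one edge from each type-$i$ configuration with $i<e$ and from each pair of type-$e$ configurations sharing $\ge 2$ edges, then apply Duke--Lefmann--R\"odl to the resulting linear auxiliary $e$-graph. On your flagged concern: the form of the Duke--Lefmann--R\"odl lemma invoked in the paper requires only \emph{linearity} of the conflict hypergraph (any two edges share at most one vertex), not the absence of longer cycles, so your $Z_s$-deletions already give everything needed and there are no ``other bounded small configurations'' to handle; the paper moreover uses the average-degree version of the lemma, making your extra high-degree deletion step unnecessary (though harmless).
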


Proposition \ref{presentlowerbound} is in fact an easy consequence of the following more general result.

\begin{theorem}\label{sparseturannumber1}
  For fixed integers $r\ge 3,e\ge 3, v\ge r+1$ satisfying $\gcd(e-1,er-v)=1$ and sufficiently large $n$, there exists an $r$-graph with
  $$\Omega(n^{\fr{er-v}{e-1}}(\log n)^{\fr{1}{e-1}})$$
  edges, which is simultaneously $\mr{G}_r(ir-     \lceil \fr{(i-1)(er-v)}{e-1}\rceil    ,i)$-free for every $2\le i\le e$. In particular, setting $i=e$ we have
  $$f_r(n,v,e)=\Omega(n^{\fr{er-v}{e-1}}(\log n)^{\fr{1}{e-1}}).$$  
\end{theorem}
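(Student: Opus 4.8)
The plan is to build a candidate $r$-graph by random sampling and then apply the Duke--Lefmann--R\"odl bound on hypergraph independence numbers to delete a small fraction of edges, obtaining a hypergraph that is simultaneously $\mr{G}_r(ir-\lceil(i-1)(er-v)/(e-1)\rceil,i)$-free for all $2\le i\le e$. Set $\alpha:=(er-v)/(e-1)$, so that the target exponent is $n^{\alpha}(\log n)^{1/(e-1)}$, and write $v_i:=ir-\lceil(i-1)\alpha\rceil$ for the forbidden-configuration parameters. First I would choose a real parameter $p=p(n)$ and let $\ma{H}_0\s\binom{[n]}{r}$ be a random $r$-graph in which each $r$-set is included independently with probability $p$; the expected size is $p\binom{n}{r}=\Theta(pn^r)$. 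To control the bad configurations, for each $2\le i\le e$ I would define an auxiliary hypergraph whose vertex set is $E(\ma{H}_0)$ (or, more conveniently, $\binom{[n]}{r}$) and whose edges record the ``forbidden $i$-tuples'' — that is, $i$-element collections of $r$-sets spanning at most $v_i$ vertices. The crucial arithmetic point, which drives the whole argument, is that $\gcd(e-1,er-v)=1$ forces the ceilings $\lceil(i-1)\alpha\rceil$ to behave well, so that a set of $v_i$ vertices supports only $O(1)$ edges and each forbidden $i$-tuple on $\le v_i$ vertices is a genuinely ``spread-out'' configuration: the number of such $i$-tuples in $\binom{[n]}{r}$ is $\Theta(n^{v_i})$ with each individual $r$-set lying in at most $O(n^{v_i-r})$ of them.

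Next I would pass to the expected surviving structure. After the random choice, the expected number of edges is $\sim pn^r$, and for each $i$ the expected number of copies of members of $\mr{G}_r(v_i,i)$ is $\Theta(p^i n^{v_i})$. Rather than simply deleting one edge per bad copy (the classical Brown--Erd\H{o}s--S\'os alteration, which loses the log factor), I would form, for the value $i$ that is tightest, the $i$-uniform hypergraph $\ma{F}$ on vertex set $E(\ma{H}_0)$ whose edges are exactly the bad $i$-tuples, and then invoke the Duke--Lefmann--R\"odl theorem: if an $i$-uniform hypergraph on $N$ vertices has average degree $t^{i-1}$ and is suitably ``uncrowded'' (no two edges share more than one vertex, or more precisely the relevant codegree conditions hold), then it has an independent set of size $\Omega\big((N/t)(\log t)^{1/(i-1)}\big)$. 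Here $N\sim pn^r$ and the average degree of $\ma{F}$ is $\sim p^{i-1}n^{v_i-r}$, so $t\sim p^{(i-1)/(i-1)}n^{(v_i-r)/(i-1)}$; choosing $p$ so that this matches up across all $i$ (this is exactly where $v_i=ir-\lceil(i-1)\alpha\rceil$ and $\gcd(e-1,er-v)=1$ conspire to make the exponents align, with $p\sim n^{\alpha-r}$ up to constants) yields an independent set — hence a $\mr{G}_r(v_i,i)$-free subhypergraph — of size $\Omega\big(n^{\alpha}(\log n)^{1/(i-1)}\big)$. Taking the intersection over $i$, or handling all $i$ simultaneously by a single application to the union of the auxiliary hypergraphs, and noting that the smallest exponent of $\log n$ is $1/(e-1)$, gives the claimed bound $\Omega(n^{\alpha}(\log n)^{1/(e-1)})$, and the final sentence $f_r(n,v,e)=\Omega(n^{\alpha}(\log n)^{1/(e-1)})$ is the special case $i=e$.

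The main obstacle, and the step I expect to require the most care, is verifying the ``uncrowdedness'' hypotheses of the Duke--Lefmann--R\"odl bound for the auxiliary hypergraph $\ma{F}$ of forbidden $i$-tuples. One must show that, after deleting a negligible set of edges of $\ma{H}_0$, no vertex of $\ma{F}$ (i.e.\ no $r$-set) has abnormally large degree, and that pairs of edges of $\ma{F}$ overlap in a controlled way — roughly, that two distinct bad $i$-tuples sharing two common $r$-sets are rare. This amounts to a second-moment / deletion argument on top of the first one, and it is here that the coprimality condition $\gcd(e-1,er-v)=1$ is essential: it guarantees that every intermediate configuration is non-degenerate (the ceiling never ``rounds away'' a vertex that would otherwise make some $j$-subtuple, $j<i$, itself forbidden), so the degree and codegree estimates for $\ma{F}$ hold with the clean exponents needed for the independence bound to kick in. Once these uncrowdedness estimates are in place, the rest is bookkeeping: choose $p=cn^{\alpha-r}$ with $c$ small, delete the $o(pn^r)$ offending edges, apply Duke--Lefmann--R\"odl, and read off the bound.
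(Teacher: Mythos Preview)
Your high-level plan---random sampling, build an auxiliary hypergraph on the edge set whose hyperedges are the forbidden configurations, then invoke Duke--Lefmann--R\"odl---is exactly the strategy of the paper. But there is a concrete error in the choice of $p$ that breaks the argument.

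You take $p=cn^{\alpha-r}$ with $\alpha=(er-v)/(e-1)$. Compute what this gives for the auxiliary $e$-graph $\ma{F}=\ma{F}_e$ of bad $e$-systems: $|V(\ma{F})|\sim pn^{r}=\Theta(n^{\alpha})$ and $|E(\ma{F})|\sim p^{e}n^{v}=\Theta(n^{\alpha})$, so the average degree of $\ma{F}$ is $\Theta(1)$. In your own notation $t^{e-1}$ equals this average degree, hence $t=\Theta(1)$, and Duke--Lefmann--R\"odl returns an independent set of size $\Omega(N/t)=\Omega(n^{\alpha})$ with \emph{no} logarithmic gain---you have just reproduced the Brown--Erd\H{o}s--S\'os lower bound. (The same calculation shows that for $2\le i\le e-1$ the average degree of $\ma{F}_i$ tends to $0$, so those auxiliary hypergraphs contribute nothing either.)

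The fix, and what the paper does, is to \emph{oversample}: take $p=\Theta(n^{\alpha-r+\epsilon})$ for a small constant $\epsilon>0$. Then $|V(\ma{F})|=\Theta(n^{\alpha+\epsilon})$, $|E(\ma{F})|=\Theta(n^{\alpha+e\epsilon})$, and the average degree is $d=\Theta(n^{(e-1)\epsilon})$, so Lemma~\ref{indset} yields an independent set of size
\[
\Omega\Big(n^{\alpha+\epsilon}\Big(\frac{\log d}{d}\Big)^{1/(e-1)}\Big)=\Omega\big(n^{\alpha}(\log n)^{1/(e-1)}\big).
\]
The coprimality $\gcd(e-1,er-v)=1$ is then used exactly to guarantee that, for this slightly larger $p$, one can still (a) delete all $i$-tuples spanning $\le v_i$ vertices for $2\le i\le e-1$ (your $Y_i$'s; this needs $f(i)>(i-1)\alpha+(i-1)\epsilon$) and simultaneously (b) delete all pairs of bad $e$-systems sharing $\ge 2$ edges (the $Z_i$'s; this needs $f(i)<(i-1)\alpha+1-(2e-i-1)\epsilon$), so that the remaining $\ma{F}$ is genuinely linear. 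These two inequalities admit an integer $f(i)$ for every $2\le i\le e-1$ precisely when $(i-1)(er-v)/(e-1)$ is never an integer, i.e.\ when $\gcd(e-1,er-v)=1$.

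A secondary point: the paper does \emph{not} apply Duke--Lefmann--R\"odl separately for each $i$ and intersect, nor to a ``union of auxiliary hypergraphs'' (which have different uniformities). It applies the lemma once, to the $e$-uniform $\ma{F}$, after first deleting enough edges so that (iii) every $i<e$ forbidden configuration is already gone and (iv) $\ma{F}$ is linear. The independent set is then automatically $\mr{G}_r(v_i,i)$-free for all $2\le i\le e$.
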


The proof of this theorem is presented in Section \ref{maintheorem}. To see that Proposition \ref{presentlowerbound} indeed follows from  Theorem \ref{sparseturannumber1}, it suffices to write $er-v=k(e-1)+q$ for some $1\le q\le e-2$ (we exclude $q=0$ since in that case the exponent of (\ref{BESbound}) is tight), then $\gcd(e-1,er-v)=1$ holds, for example, when $q=\pm1$ or $e-1$ is a prime. Proposition \ref{presentlowerbound} follows by setting  $q=-1$.

The proof of Theorem \ref{sparseturannumber1} relies on a novel application of the lower bound on the hypergraph independence number due to Duke, Lefmann, and R{\"o}dl \cite{indset2}, as stated in Section \ref{maintheorem}. Since in our proof we cannot get rid of the coprime condition, it remains an interesting question to determine whether this constraint is necessary. Moreover, we have the following open problem.

\begin{problem}\label{conjecture2}
  For which parameters $r,e,v$ satisfying $r\ge 3, e\ge 3, v\ge r+1$ and $e-1\nmid er-v$, there exist a constant $\epsilon>0$ such that for sufficiently large $n$, $$f_r(n,v,e)=\Omega(n^{\fr{er-v}{e-1}+\epsilon})?$$
\end{problem}

It is noteworthy that sparse hypergraphs have found many applications in theoretical computer science and coding theory, some of which are listed as follows:

\begin{itemize}
  \item $\mr{G}_3(6,3)$-free 3-graphs were used in PCP analysis and Linearity Testing \cite{PCPtesting}, Communication Complexity \cite{CommunicationComplexity}, Monotonicity Testing \cite{Monotonicitytesting} and Coded Caching Schemes \cite{codedcaching};

  \item $\mr{G}_r(er-r,e)$-free $r$-graphs can be used to construct Perfect Hash Families \cite{Shangguan16} and Parent-Identifying Set Systems (IPPSs for short);

  \item $r$-graphs which are simultaneously $\mr{G}_r(i-1,i)$-free for each $1\le i\le e$
were used to construct uniform Combinatorial Batch Codes \cite{batch1} (uniform CBCs for short);
and in particular, for $r=3$ they were used in a bitprobe model with three probes \cite{probes};

\item  $r$-graphs which are simultaneously $\mr{G}_r(ir-i,i)$-free for each $1\le i\le e$ were used to construct
optimal Locally Recoverable Codes \cite{LRC} (optimal LRCs for short).
\end{itemize}
In Section \ref{application-2} we will present the applications of Theorem \ref{sparseturannumber1} in the constructions of IPPSs, uniform CBCs and optimal LRCs.

The rest of this paper is organized as follows. 
In Section \ref{maintheorem} we present the proof of our main result, namely Theorem \ref{sparseturannumber1}.
In Section \ref{application-1} we discuss the applications of Theorem \ref{sparseturannumber1} to two problems in extremal combinatorics, and in Section \ref{application-2} we present three applications of Theorem \ref{sparseturannumber1} to coding theory.

\section{Proof of the main result} \label{maintheorem}

\noindent To prove Theorem \ref{sparseturannumber1} we will make use of the following lemma of Duke, Lefmann, and R{\"o}dl \cite{indset2} (whose proof applied a result of \cite{inpset1}). Note that an {\it independent set} of an $r$-graph is a subset of vertices such that no $r$ elements form an edge, and an $r$-graph is said to be {\it linear} if any two distinct edges share at most one vertex.

\begin{lemma}[see Theorem 2, {\rm{\cite{indset2}}}]\label{indset}
  For all fixed $r\ge3$ there exists a constant $c>0$ depending only on $r$ such that every linear $r$-graph on $n$ vertices with average degree\footnote{The original theorem in \cite{indset2} has the condition `with maximum degree at most $d$'. However, since for any hypergraph with average degree at most $d$, there exists a subhypergraph of it which has at least half of its vertices and maximum degree at most $2d$, it is not hard to observe that the assertion of the original theorem works also with the condition `with average degree at most $d$', at the expense of a worse constant $c$.} at most $d$ has an independent set of size at least $cn(\fr{\log d}{d})^{\fr{1}{r-1}}.$
\end{lemma}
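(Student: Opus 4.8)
The goal is to construct, for sufficiently large $n$, an $r$-graph $\ma{H}$ on (essentially) $n$ vertices with $\Omega(n^{\fr{er-v}{e-1}}(\log n)^{\fr{1}{e-1}})$ edges that is simultaneously $\mr{G}_r(ir-\lc\fr{(i-1)(er-v)}{e-1}\rc,i)$-free for every $2\le i\le e$. The strategy is to first build an auxiliary \emph{configuration hypergraph} and then extract the desired $r$-graph from a large independent set in it via Lemma \ref{indset}. Concretely, I would start from the complete $r$-graph $\binom{[n]}{r}$ and, rather than thinning edges at random as in the classical alteration method, encode each ``forbidden'' configuration as a hyperedge of an auxiliary hypergraph $\ma{F}$ whose vertex set is the edge set $\binom{[n]}{r}$ itself. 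Each hyperedge of $\ma{F}$ would correspond to a minimal bad configuration: a collection of $i$ edges of the original $r$-graph (for some $2\le i\le e$) whose union is too small, i.e.\ spans at most $ir-\lc\fr{(i-1)(er-v)}{e-1}\rc$ vertices. An independent set in $\ma{F}$ is then exactly a sparse $r$-graph in the required sense, so a \emph{large} independent set in $\ma{F}$ yields the construction.

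\textbf{Key steps.} First I would make $\ma{F}$ \emph{uniform} and \emph{linear} so that Lemma \ref{indset} applies. Uniformity is not automatic, since bad configurations of different sizes $i$ involve different numbers of original edges; the coprimality hypothesis $\gcd(e-1,er-v)=1$ is what I expect to pin down a single controlling value of $i$ (heuristically $i=e$, the densest constraint) whose configurations dominate, allowing the other constraints to be absorbed or handled separately. Second, I would estimate the number of vertices $N=\binom{n}{r}=\Theta(n^r)$ of $\ma{F}$ and its average degree $d$: the average degree counts, per original edge, how many bad $e$-edge configurations it participates in, which by a standard counting argument over the number of ways to complete a fixed edge to a union of the prescribed small size is of order $d=\Theta(n^{(e-1)r - (v - r)})$ up to the ceiling corrections, i.e.\ polynomial in $n$ with an exponent I would compute exactly from $ir-\lc\fr{(i-1)(er-v)}{e-1}\rc$. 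Third, I would feed $N$ and $d$ into Lemma \ref{indset} to obtain an independent set of size $cN(\fr{\log d}{d})^{\fr{1}{r_{\ma{F}}-1}}$, where $r_{\ma{F}}$ is the uniformity of $\ma{F}$; the point of the construction is that $r_{\ma{F}}-1$ will be arranged to equal $e-1$, so that the $(\fr{\log d}{d})^{\fr{1}{e-1}}$ factor produces both the main power of $n$ and the crucial extra $(\log n)^{\fr{1}{e-1}}$ gain over the purely random alteration bound. Finally, since $\log d=\Theta(\log n)$, collecting the exponents should yield exactly $\Omega(n^{\fr{er-v}{e-1}}(\log n)^{\fr{1}{e-1}})$ edges.

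\textbf{The main obstacle.} The hardest part will be arranging that the auxiliary hypergraph $\ma{F}$ is simultaneously (a) effectively $e$-uniform (or reducible to a uniform linear hypergraph of the right uniformity), (b) \emph{linear}, i.e.\ any two of its hyperedges share at most one original edge, and (c) of controlled average degree, all at once. Linearity is delicate: two distinct bad configurations could overlap in several common edges, which would violate the hypothesis of Lemma \ref{indset}; I anticipate needing a preliminary random sparsification or a careful deletion step to destroy such heavy overlaps while losing only a constant fraction of the edges, so that the effective hypergraph is linear with the same order of average degree. Relatedly, verifying that a large independent set in the reduced $\ma{F}$ really is $\mr{G}_r(ir-\lc\fr{(i-1)(er-v)}{e-1}\rc,i)$-free for \emph{all} $i$ in the range $2\le i\le e$ simultaneously—not merely for $i=e$—requires showing that the constraints for smaller $i$ are implied by, or compatible with, the dominant one, and this is exactly where I expect the coprimality condition $\gcd(e-1,er-v)=1$ to be indispensable, since it forces the ceiling values $\lc\fr{(i-1)(er-v)}{e-1}\rc$ to behave coherently across $i$ and rules out the degenerate alignments that would otherwise break the argument.
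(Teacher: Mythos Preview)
Your proposal is not a proof of Lemma~\ref{indset} itself (which the paper simply cites from \cite{indset2}), but rather a sketch of how to derive Theorem~\ref{sparseturannumber1} from that lemma. Comparing against the paper's proof of the theorem: your high-level strategy---build an auxiliary $e$-uniform ``configuration hypergraph'' whose vertices are $r$-edges and whose hyperedges are bad $e$-systems, then extract a large independent set via Lemma~\ref{indset}---is exactly the paper's approach.

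There is, however, a genuine gap in the execution. You propose to take the vertex set of $\ma{F}$ to be all of $\binom{[n]}{r}$ and then enforce linearity by deletion, ``losing only a constant fraction of the edges.'' This cannot work: on the full $\binom{[n]}{r}$ the number of pairs of bad $e$-systems sharing two or more common $r$-edges is of polynomial order well above $n^r$ (since $v\ge r+1$), so no deletion scheme can achieve linearity while retaining a positive fraction of the vertices of $\ma{F}$. The paper's fix is a \emph{mandatory} preliminary random sparsification at probability $p=\Theta(n^{-(v-r)/(e-1)+\ep})$ for a small $\ep>0$; only after this are both the overlap counts $Z_i$ (pairs of bad $e$-systems sharing exactly $i$ edges whose common span is large) and the small-configuration counts $Y_i$ (sets of $i<e$ edges on too few vertices) of size $o(\ex[X])$, so that a further deletion step costs only a negligible fraction. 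After sparsification the auxiliary $e$-graph has $\Theta(pn^r)$ vertices, $O(p^en^v)$ edges, and average degree $d=O(n^{(e-1)\ep})$---not your $\Theta(n^{(e-1)r-(v-r)})$---and it is this $d$, with $\log d=\Theta(\log n)$, that produces the $(\log n)^{1/(e-1)}$ gain.

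The role of coprimality is also more concrete than you indicate. The constraints for $2\le i\le e-1$ are not implied by the $i=e$ case; they are eliminated directly by deleting one edge from each member of $\mr{Y}_i$. The hypothesis $\gcd(e-1,er-v)=1$ is precisely what guarantees, for every $2\le i\le e-1$, an integer $f(i)=\lc\fr{(i-1)(er-v)}{e-1}\rc$ lying \emph{strictly} between $\fr{(i-1)(er-v)}{e-1}$ and $\fr{(i-1)(er-v)}{e-1}+1$; the two-sided inequality is what allows a single choice of $\ep$ to make $\ex[Y_i]$ and $\ex[Z_i]$ small simultaneously. After those deletions the resulting $\ma{H}_1$ is already $\mr{G}_r(ir-f(i),i)$-free for all $i<e$ and the auxiliary $e$-graph built on it is linear, so Lemma~\ref{indset} applies cleanly.
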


Recall that we view the parameters $v,e,r$ as constants, whereas $n$ tends to infinity. Since we are only interested in the asymptotic behavior we do not make an attempt to optimize any of the constants. The following two inequalities are well known (see, e.g. \cite{alon2016probabilistic}).

\vspace{5pt}

\noindent\textbf{Chernoff's inequality.}  Suppose $X_1,\ldots,X_n$ are independent random variables taking values in $\{0,1\}$. Let $X$ denote their sum and let $\mu=\ex[X]$ denote the sum's expected value. Then for any $\delta\in[0,1]$, $\Pr[X\le(1-\delta)\mu]\le e^{-\fr{\delta^2\mu}{2}}$.

\vspace{5pt}

\noindent\textbf{Markov's inequality.} If $X$ is a nonnegative random variable and $a>0$, then $\Pr[X\ge a]\le\fr{\ex[X]}{a}$.

\vspace{5pt}

Below we present the proof of Theorem \ref{sparseturannumber1}.

\begin{proof}[\textbf{Proof of Theorem \ref{sparseturannumber1}}]
Set $p:=p(n)=\Theta(n^{-\fr{v-r}{e-1}+\ep})$ for some $\ep>0$, which will be made explicit later.
Generate an $r$-graph $\ma{H}_0\s\binom{[n]}{r}$ by picking each member of $\binom{[n]}{r}$ independently with probability $p$.
Let $X$ denote the number of edges in $\ma{H}_0$.
Clearly,
   \begin{equation}\label{X}
     \begin{aligned}
       \ex[X]=p\binom{n}{r}=\Theta(n^{\fr{er-v}{e-1}+\ep}).
     \end{aligned}
   \end{equation}

For  $2\le i\le e-1$, let $\mr{Y}_i$ be the collection of all $i$ distinct edges of $\ma{H}_0$
whose union contains at most $ir-f(i)$ vertices, where  $f(i)$ will be determined later.
Let $Y_i$ denote the size of $\mr{Y}_i$. Then
   \begin{equation}\label{Yi}
      \begin{aligned}
       \ex[Y_i]=O(p^in^{ir-f(i)})=O(n^{\fr{i(er-v)}{e-1}+i\ep-f(i)}),
     \end{aligned}
   \end{equation}

\noindent where the first equality follows from the fact that there are at most $O(n^{ir-f(i)})$ ways to choose $i$ edges whose union contains at most $ir-f(i)$ vertices.

We say that $e$ distinct edges of $\ma{H}_0$ form a bad $e$-system if their union contains at most $v$ vertices. Clearly, two distinct bad $e$-systems can share at most $e-1$ edges.
For each $2\le i\le e-1$, let $\mr{Z}_i$ be the collection of the unordered pairs of bad $e$-systems which share precisely $i$ edges, and the union of those $i$ common edges contains at least $ir-f(i)+1$ vertices.
For $\{\ma{Z},\ma{Z}'\}\in \mr{Z}_i$, it is clear that $|E(\ma{Z})\cup E(\ma{Z}')|=2e-i$ and $|V(\ma{Z})\cup V(\ma{Z}')|\le 2v-(ir-f(i)+1).$
Let $Z_i$ denote the size of $\mr{Z}_i$. Then
   \begin{equation}\label{Zi}
      \begin{aligned}
       \ex[Z_i]=O(p^{2e-i}n^{2v-(ir-f(i)+1)})=O(n^{f(i)+\ep(2e-i)-\fr{(i-2)(er-v)}{e-1}-1}),
     \end{aligned}
   \end{equation}

\noindent where the first equality follows from the fact that there are at most $O(n^{2v-(ir-f(i)+1)})$ ways to choose $2e-i$ edges whose union contains at most $2v-(ir-f(i)+1)$ vertices.
Lastly, let $W$ denote the number of bad $e$-systems in $\ma{H}_0$. Then
   \begin{equation}\label{W}
     \begin{aligned}
       \ex[W]=O(p^en^v)=O(n^{\fr{er-v}{e-1}+e\ep}).
     \end{aligned}
   \end{equation}

   In order to apply Lemma \ref{indset}, we will bound from above the number of pairs of bad $e$-systems which share at least two edges, by picking $\epsilon$ and $f(i)$ so that
   \begin{equation*}\label{con}
       \ex[Y_i]=o(\ex[X])\text{\quad and\quad}\ex[Z_i]=o(\ex[X])
   \end{equation*}
   for each $2\le i\le e-1$ as $n\rightarrow\infty$.
   From (\ref{X}), (\ref{Yi}) and (\ref{Zi}), it is easy to see that $\ex[Y_i]=o(\ex[X])$ if and only if
   \begin{equation}\label{condition1}
     f(i)>\ep(i-1)+\fr{(i-1)(er-v)}{e-1},
   \end{equation}
   and $\ex[Z_i]=o(\ex[X])$ if and only if
   \begin{equation}\label{condition2}
     f(i)<\fr{(i-1)(er-v)}{e-1}-\ep(2e-i-1)+1.
   \end{equation}
      Let $ a=\min_{2\le i\le e-1}\left\{\fr{1}{i-1}\big(f(i)-\fr{(i-1)(er-v)}{e-1}\big),~\fr{1}{2e-i-1}\big(\fr{(i-1)(er-v)}{e-1}+1-f(i)\big)\right\}.$
   There is $\epsilon\in(0,a)$ 
   satisfying
  (\ref{condition1}) and (\ref{condition2}) if and only if for each $2\le i\le e-1$, there exists an integer $f(i)$ such that
 \begin{equation} \label{f(i)}
   \begin{aligned}
     \fr{(i-1)(er-v)}{e-1}<f(i)<\fr{(i-1)(er-v)}{e-1}+1.
   \end{aligned}
   \end{equation}
   Since $f(i)$ is an integer, (\ref{f(i)}) holds if and only if $e-1\nmid(i-1)(er-v).$
   It is easy to verify that those $e-2$ indivisibility conditions hold simultaneously if and only if $\gcd(e-1,er-v)=1.$
   Under this condition, it suffices to pick for each $2\le i\le e-1$,
   $$f(i)=\lceil \fr{(i-1)(er-v)}{e-1}\rceil$$ and an arbitrary $\ep\in(0,a)$
   (note that by the choices of $f(i)$ we have $a>0$).

   Applying Chernoff's inequality for $X$ and Markov's inequality for $Y_i,Z_i$ and $W$, it is easy to see that for each $2\le i\le e-1$ and sufficiently large $n$,
   $$\Pr[X<0.9\ex[x]]<\fr{1}{2e},~\Pr[Y_i>2e\ex[Y_i]]<\fr{1}{2e},~\Pr[Z_i>2e\ex[Z_i]]<\fr{1}{2e},~\Pr[W>2e\ex[W]]<\fr{1}{2e}.$$
   Therefore, with positive probability, there exists an $r$-graph $\ma{H}_0\s\binom{[n]}{r}$ such that for each $2\le i\le e-1$,
   \begin{equation}\label{eq-new}
     X\ge 0.9 \ex[X], ~Y_i\le 2e\ex[Y_i],~Z_i\le 2e\ex[Z_i],~W\le 2e\ex[W].
   \end{equation}

   Fix such an $\ma{H}_0$. We construct a subhypergraph $\ma{H}_1$ of $\ma{H}_0$ as follows. For every $2\le i\le e-1$, remove from $\ma{H}_0$ one edge from each member of $\mr{Y}_i$, and one edge from $E(\ma{Z})\cup E(\ma{Z}')$ for each pair $\{\ma{Z},\ma{Z}'\}\in\mr{Z}_i$. It is not hard to check that $\ma{H}_1$ satisfies the following properties:
   \begin{itemize}
     \item [(i)] $|\ma{H}_1|=\Omega(n^{\fr{er-v}{e-1}+\ep})$;

     \item [(ii)] the number of bad $e$-systems contained in $\ma{H}_1$ is at most $O(p^en^v)=O(n^{\fr{er-v}{e-1}+e\ep})$;

     \item [(iii)] for each $2\le i\le e-1$, the union of any $i$ distinct edges in $\ma{H}_1$ contains more than $ir-f(i)$ vertices;
     \item [(iv)] any two bad $e$-systems in $\ma{H}_1$ can share at most one edge.
   \end{itemize}

Indeed, (i) is an easy consequence of the following calculation:
\begin{equation*}
  |\ma{H}_1|\ge|\ma{H}_0|-\sum_{i=2}^{e-1}|\mr{Y}_i|-\sum_{i=2}^{e-1}|\mr{Z}_i|\ge 0.9\ex[X]-o(\ex[X])=\Omega(\ex[X]);
\end{equation*}
\noindent (ii) follows from \eqref{X}, (\ref{W}) and the observation that removing edges from $\ma{H}_0$ does not increase the number of bad $e$-systems; (iii) holds since according to our construction, $\ma{H}_1$ does not contain any member of $\mr{Y}_i$ for any $2\le i\le e-1$.
It remains to verify (iv).
Assume to the contrary that $\ma{H}_1$ still contains two bad $e$-systems that share $i$ edges for some $2\le i\le e-1$.
On one hand, if those $i$ edges are spanned by at least $ir-f(i)+1$ vertices, then the pair of such two bad $e$-systems must belong to $\mr{Z}_i$, which is a contradiction. 
On the other hand, if those $i$ edges are spanned by at most $ir-f(i)$ vertices, then they must form a member of $\mr{Y}_i$, which is again a contradiction. 

Next we construct an auxiliary $e$-graph $\ma{U}\s\binom{E(\ma{H}_1)}{e}$ as follows:
   \begin{itemize}
     \item  the vertex set of $\ma{U}$ is formed by the edge set of $\ma{H}_1$;
     \item  $e$ vertices of $\ma{U}$ form an edge if and only if the corresponding $e$ $r$-edges in $\ma{H}_1$ form a bad $e$-system.
        \end{itemize}

     \noindent It is routine to check that the following hold:
     \begin{itemize}
       \item   $\ma{U}$ is linear (by (iv));
       \item  $\ma{U}$ has at least $\Omega(n^{\fr{er-v}{e-1}+\ep})$ vertices (by (i)) and at most $O(p^en^v)=O(n^{\fr{er-v}{e-1}+e\ep})$ edges (by (ii));
       \item  $d(\ma{U})$, the average degree of $\ma{U}$, is at most
       $d(\ma{U})=O(\fr{e\cdot |E(\ma{U})|}{|V(\ma{U})|})=O(n^{(e-1)\ep}).$
     \end{itemize}

     \noindent Lemma \ref{indset} therefore applies and $\ma{U}$ has an independent set of size at least
     \begin{equation*}\label{independent}
       \Omega\big(|V(\ma{U}|\cdot(\fr{\log d(\ma{U})}{d(\ma{U})})^{\fr{1}{e-1}}\big)=\Omega(n^{\fr{er-v}{e-1}}(\log n)^{\fr{1}{e-1}}).
     \end{equation*}

     Now the theorem follows from the following simple observation:
     every independent set $\ma{I}\s V(\ma{U})$ corresponds to a $\mr{G}_r(v,e)$-free subhypergraph $\ma{H}_{\ma{I}}\s\ma{H}_1$ with $|\ma{I}|$ edges; moreover, by (iii)
     $\ma{H}_{\ma{I}}$ is also $\mr{G}_r(ir-f(i),i)$-free for each $2\le i\le e-1$.
\end{proof}

The proof of Theorem \ref{sparseturannumber1} leads to the following proposition.

\begin{proposition}
    Let $s\ge 1, r\ge 3$ and $(v_i,e_i),1\le i\le s$ be fixed integers satisfying $v_i\ge r+1,e_i\ge 2$. Suppose further that $e_1\ge 3, \gcd(e_1-1,e_1r-v_1)=1$ and $\frac{e_1r-v_1}{e_1-1}<\frac{e_ir-v_i}{e_i-1}$ for $2\le i\le s$.
    Then there exists an $r$-graph with $\Omega(n^{\frac{e_1r-v_1}{e_1-1}}(\log n)^{\frac{1}{e_1-1}})$ edges which is $\mr{G}_r(v_i,e_i)$-free for each $1\le i\le s$.
\end{proposition}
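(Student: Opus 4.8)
The plan is to run the probabilistic construction from the proof of Theorem~\ref{sparseturannumber1} with the pair $(v_1,e_1)$ playing the role of $(v,e)$, and to kill the remaining forbidden families $\mr{G}_r(v_i,e_i)$, $2\le i\le s$, by one extra round of edge deletion in the alteration step. Write $\beta:=\fr{e_1r-v_1}{e_1-1}$ (we may assume $e_1r>v_1$, since otherwise the claimed bound is vacuous). Set $p:=\Theta(n^{-\fr{v_1-r}{e_1-1}+\ep})$ for a small $\ep>0$ to be fixed below, sample $\ma{H}_0\s\bi{[n]}{r}$ by including each $r$-set independently with probability $p$, and let $X=|\ma{H}_0|$; then $\ex[X]=p\bi{n}{r}=\Theta(n^{\beta+\ep})$ exactly as in \eqref{X}.

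First I would deal with the new families. For $2\le i\le s$ let $W_i$ count the copies of members of $\mr{G}_r(v_i,e_i)$ in $\ma{H}_0$; since there are $O(n^{v_i})$ ways to pick $e_i$ $r$-sets spanning at most $v_i$ vertices, $\ex[W_i]=O(p^{e_i}n^{v_i})=O(n^{\,v_i-\fr{e_i(v_1-r)}{e_1-1}+e_i\ep})$. A direct rearrangement shows that the hypothesis $\fr{e_1r-v_1}{e_1-1}<\fr{e_ir-v_i}{e_i-1}$ is \emph{equivalent} to the strict inequality $v_i-\fr{e_i(v_1-r)}{e_1-1}<\beta$; hence, once $\ep$ is small enough, $\ex[W_i]=o(\ex[X])$ for every $2\le i\le s$. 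Simultaneously, with $f(i):=\lc\fr{(i-1)(e_1r-v_1)}{e_1-1}\rc$ for $2\le i\le e_1-1$, and with the quantities $Y_i,Z_i,W$ defined relative to $(v_1,e_1)$ as in \eqref{Yi}, \eqref{Zi} and \eqref{W}, the coprimality assumption $\gcd(e_1-1,e_1r-v_1)=1$ guarantees (exactly as in the proof of Theorem~\ref{sparseturannumber1}, via \eqref{condition1}, \eqref{condition2} and \eqref{f(i)}) that $\ex[Y_i]=o(\ex[X])$ and $\ex[Z_i]=o(\ex[X])$ for all sufficiently small $\ep$. Since this is a finite list of strict inequalities, each valid on a nonempty interval of positive $\ep$ accumulating at $0$, I can fix one common admissible $\ep>0$.

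Next, Chernoff's inequality applied to $X$ and Markov's inequality applied to $W,W_i,Y_i,Z_i$ show, via a union bound over these finitely many events, that with positive probability there is a choice of $\ma{H}_0$ with $X\ge 0.9\ex[X]$, with $W=O(n^{\beta+e_1\ep})$, and with $W_i,Y_i,Z_i$ each at most a constant multiple of their means, cf.~\eqref{eq-new}; fix such an $\ma{H}_0$. Form $\ma{H}_1$ from $\ma{H}_0$ by deleting one edge from each member of $\mr{Y}_i$ and from each pair in $\mr{Z}_i$ ($2\le i\le e_1-1$) and, in addition, one edge from each copy of $\mr{G}_r(v_i,e_i)$ ($2\le i\le s$). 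The total number of deletions is $o(\ex[X])$, so $|\ma{H}_1|=\Omega(n^{\beta+\ep})$; deleting edges creates no copy of any $\mr{G}_r(v_i,e_i)$, so $\ma{H}_1$ is $\mr{G}_r(v_i,e_i)$-free for every $2\le i\le s$; deleting edges does not increase the number of bad $e_1$-systems, which therefore stays $O(n^{\beta+e_1\ep})$; and, word for word as in the proof of Theorem~\ref{sparseturannumber1}, any two bad $e_1$-systems in $\ma{H}_1$ share at most one edge. I would then build the auxiliary $e_1$-graph $\ma{U}$ on vertex set $E(\ma{H}_1)$ whose edges are the bad $e_1$-systems: it is linear, has $\Omega(n^{\beta+\ep})$ vertices and $O(n^{\beta+e_1\ep})$ edges, hence average degree $O(n^{(e_1-1)\ep})$; since $e_1\ge 3$, Lemma~\ref{indset} produces an independent set of size $\Omega\big(n^{\beta+\ep}(\fr{\log n}{n^{(e_1-1)\ep}})^{\fr{1}{e_1-1}}\big)=\Omega(n^{\beta}(\log n)^{\fr{1}{e_1-1}})$. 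Any independent set $\ma{I}$ in $\ma{U}$ corresponds to a subhypergraph $\ma{H}_{\ma{I}}\s\ma{H}_1$ with $|\ma{I}|$ edges that contains no bad $e_1$-system, hence is $\mr{G}_r(v_1,e_1)$-free, and being a subhypergraph of $\ma{H}_1$ it is also $\mr{G}_r(v_i,e_i)$-free for $2\le i\le s$ — which is the assertion.

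The only point needing genuine care — and the step I expect to be the main obstacle — is the algebraic check that the ``slowest'' density hypothesis $\fr{e_1r-v_1}{e_1-1}<\fr{e_ir-v_i}{e_i-1}$ is exactly strong enough to force $\ex[W_i]=o(\ex[X])$, together with verifying that the constraints it imposes on $\ep$ are compatible with those coming from \eqref{condition1}, \eqref{condition2} and \eqref{f(i)}; beyond that, the argument is a transcription of the proof of Theorem~\ref{sparseturannumber1}.
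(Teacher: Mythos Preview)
Your proposal is correct and follows essentially the same route as the paper's own (sketched) proof: run the construction of Theorem~\ref{sparseturannumber1} with $(v_1,e_1)$ in the role of $(v,e)$, verify via the identity $\fr{e_jr-v_j}{e_j-1}=r-\fr{v_j-r}{e_j-1}$ that the hypothesis is equivalent to $\ex[W_j]=o(\ex[X])$ for small $\ep$, delete one edge from each bad $e_j$-system ($j\ge 2$) in the alteration step, and then finish with the auxiliary linear $e_1$-graph and Lemma~\ref{indset}. The paper is slightly more explicit about the admissible range of $\ep$ (introducing the constant $b=\min_{2\le j\le s}\{\fr{v_1-r}{e_1-1}-\fr{v_j-r}{e_j-1}\}$ and taking $\ep\in(0,\min\{a,b\})$), but this is exactly what your ``finite list of strict inequalities'' argument encodes.
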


\begin{proof}[Sketch of the proof]
With the notation of the previous proof, we generate an $r$-graph $\ma{H}_0\subseteq\binom{[n]}{r}$ by picking each element of $\binom{[n]}{r}$ independently with probability $p:=\Theta(n^{-\frac{v_1-r}{e_1-1}+\epsilon})$ for some small constant $\epsilon>0$. For $2\le i\le e$, let $X,Y_i,Z_i,W,f(i)$ and $a\in(0,1)$ be defined analogously to the proof of Theorem \ref{sparseturannumber1} but with respect to $v_1$ and $e_1$. Hence, the expected number of edges in $\ma{H}_0$ is $\ex[X]=\Theta(n^{\frac{e_1r-v_1}{e_1-1}+\epsilon})$.
Moreover, for $2\le j\le s$, the expected number of bad $e_j$-systems contained in $\ma{H}_0$ is $\ex[W_j]=\Theta(n^{v_j-\frac{e_j(v_1-r)}{e_1-1}+e_j\epsilon})$.

By assumption, it is clear that $\frac{v_j-r}{e_j-1}<\frac{v_1-r}{e_1-1}$ for each $2\le j\le s$. Let $b=\min_{2\le j\le s}~\{\frac{v_1-r}{e_1-1}-\frac{v_j-r}{e_j-1}\}$. Then for any $\epsilon\in(0,b)$ and $2\le j\le s$,
 \begin{equation*}
     (\frac{e_1r-v_1}{e_1-1}+\epsilon)-(v_j-\frac{e_j(v_1-r)}{e_1-1}+e_j\epsilon)=(e_j-1)(\frac{v_1-r}{e_1-1}-
     \frac{v_j-r}{e_j-1}-\epsilon)>0.
 \end{equation*}

 \noindent Choosing an arbitrary $\epsilon\in(0,\min\{a,b\})$, it is easy to check that for any $2\le i\le e$ and $2\le j\le s$,
 $$\ex[Y_i]=o(\ex[X]),~\quad\ex[Z_i]=o(\ex[X])\text{\quad and\quad}\ex[W_j]=o(\ex[X]).$$

 \noindent Similar to the previous proof, by applying Chernoff's inequality for $X$ and Markov's inequality for $Y_i,Z_i,W$ and $W_j$, one can show that with positive probability there exists an $r$-graph $\ma{H}_0\s\binom{[n]}{r}$ such that 
   \begin{equation*}
     X=\Omega(\ex[X]),~W=O(\ex[W]),~\Gamma=o(\ex[X])\text{~for every $\Gamma\in\{Y_i,Z_i,W_j:2\le i\le e,2\le j\le s\}$}.
   \end{equation*}

The rest of the proof follows fairly straightforwardly from the argument of the previous proof, hence is omitted.
\end{proof}

\section{Applications to two extremal problems}\label{application-1}

\noindent The probabilistic construction of Theorem \ref{sparseturannumber1} immediately implies new lower bounds for two hypergraph extremal problems, as stated below.

\subsection{$\mr{H}_r(q,e)$-free $r$-graphs}

\noindent Bujt\'{a}s and Tuza \cite{BujtasTuzaCBC} studied the following extremal problem which is related to the construction of uniform Combinatorial Batch Codes (see Subsection \ref{application-CBC} below).
An $r$-graph is said to be {\it $\mr{H}_r(q,e)$-free} if it is simultaneously $\mr{G}_r(i-q-1,i)$-free for every $1\le i\le e$.
In \cite{BujtasTuzaCBC} it was shown that for fixed integers $r\ge 3, e> q+r\ge 3$, $$\e_r(n,\mr{H}_r(q,e))=\Omega(n^{r-1+\fr{r+q}{e-1}}).$$

The following proposition is a direct consequence of Theorem \ref{sparseturannumber1}.

\begin{proposition} \label{universeconsequence1}
  For fixed integers $r\ge 3, e> q+r\ge 3$ with $\gcd(e-1,r+q)=1$,
  $$\e_r(n,\mr{H}_r(q,e))=\Omega(n^{r-1+\fr{r+q}{e-1}}(\log n)^{\fr{1}{e-1}})$$ as $n\rightarrow\infty$.
\end{proposition}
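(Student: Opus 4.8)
The plan is to derive Proposition \ref{universeconsequence1} from Theorem \ref{sparseturannumber1} by showing that, for the specific parameters at hand, the $r$-graph produced by Theorem \ref{sparseturannumber1} is exactly an $\mr{H}_r(q,e)$-free $r$-graph with the desired number of edges. Concretely, I would first translate the $\mr{H}_r(q,e)$-free condition into the language of $\mr{G}_r(\cdot,\cdot)$-freeness: by definition $\mr{H}_r(q,e)$-free means $\mr{G}_r(i-q-1,i)$-free for every $1\le i\le e$. For $1\le i\le q+1$ the parameter $i-q-1\le 0$, so $\mr{G}_r(i-q-1,i)$-freeness is either vacuous or automatic (any $i$ edges span at least $r\ge i-q-1$ vertices when $i$ is small, and for $i=1$ a single edge trivially spans $r$ vertices), hence only the range $q+2\le i\le e$ carries content. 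So it suffices to produce an $r$-graph that is $\mr{G}_r(i-q-1,i)$-free for all $q+2\le i\le e$.

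Next I would match this with the conclusion of Theorem \ref{sparseturannumber1}. Apply that theorem with $e$ as given and with $v := e\cdot r - (e-1)(r-1) - (r+q) = er - (e-1)(r-1) - r - q$; equivalently, choose $v$ so that $\frac{er-v}{e-1} = r-1+\frac{r+q}{e-1}$, i.e. $er-v = (e-1)(r-1) + (r+q)$. One must check $v\ge r+1$ and $\gcd(e-1,er-v)=1$. Since $er-v = (e-1)(r-1)+(r+q) \equiv r+q \pmod{e-1}$, the condition $\gcd(e-1,er-v)=1$ is exactly the hypothesis $\gcd(e-1,r+q)=1$. The inequality $v\ge r+1$ should follow from $e > q+r$ after a short computation. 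Theorem \ref{sparseturannumber1} then yields an $r$-graph with $\Omega(n^{\frac{er-v}{e-1}}(\log n)^{\frac{1}{e-1}}) = \Omega(n^{r-1+\frac{r+q}{e-1}}(\log n)^{\frac{1}{e-1}})$ edges that is $\mr{G}_r\!\big(ir-\lceil\frac{(i-1)(er-v)}{e-1}\rceil, i\big)$-free for every $2\le i\le e$.

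It then remains to verify that $ir-\lceil\frac{(i-1)(er-v)}{e-1}\rceil \ge i-q-1$ for each relevant $i$, because $\mr{G}_r(v',i)$-freeness implies $\mr{G}_r(v'',i)$-freeness whenever $v''\le v'$ (fewer vertices is a stronger sparsity demand on $i$ edges, so being free of the larger threshold is the stronger property — more precisely, if any $i$ edges span $> v'$ vertices then they certainly span $> v''$ vertices). So I would compute $ir - \lceil\frac{(i-1)(er-v)}{e-1}\rceil$ using $er-v=(e-1)(r-1)+(r+q)$: here $\frac{(i-1)(er-v)}{e-1} = (i-1)(r-1) + \frac{(i-1)(r+q)}{e-1}$, so $ir-\lceil\cdots\rceil = ir-(i-1)(r-1)-\lceil\frac{(i-1)(r+q)}{e-1}\rceil = i + (r-1) - \lceil\frac{(i-1)(r+q)}{e-1}\rceil$. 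One needs this to be $\ge i-q-1$, i.e. $\lceil\frac{(i-1)(r+q)}{e-1}\rceil \le r+q$, which holds iff $\frac{(i-1)(r+q)}{e-1}\le r+q$, i.e. $i-1\le e-1$, i.e. $i\le e$ — always true. Hence the $r$-graph from Theorem \ref{sparseturannumber1} is $\mr{G}_r(i-q-1,i)$-free for all $2\le i\le e$, and trivially for $i=1$, so it is $\mr{H}_r(q,e)$-free, completing the proof.

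The main obstacle here is essentially bookkeeping: getting the arithmetic of the ceiling functions exactly right and confirming the monotonicity direction of $\mr{G}_r(v',i)$-freeness in $v'$, together with checking the side conditions $v\ge r+1$ and the equivalence of the two gcd conditions. There is no new probabilistic or combinatorial input needed — everything is a clean specialization of Theorem \ref{sparseturannumber1}.
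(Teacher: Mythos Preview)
Your proposal is correct and is essentially the paper's own proof: your choice $v = er-(e-1)(r-1)-(r+q)$ simplifies to $v=e-q-1$, exactly the value the paper plugs into Theorem \ref{sparseturannumber1}, and your verification of the key inequality $ir-\lceil\frac{(i-1)(er-v)}{e-1}\rceil \ge i-q-1$ is the same computation the paper does (the paper is just terser, omitting the monotonicity remark and the trivial small-$i$ cases). One tiny refinement: the side condition $v\ge r+1$ does not follow from $e>q+r$ alone (that only gives $v\ge r$), but the gcd hypothesis rules out $e=q+r+1$ since then $r+q=e-1$, forcing $\gcd(e-1,r+q)=e-1>1$; so in fact $e\ge q+r+2$ and $v\ge r+1$ holds.
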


\begin{proof}
  Apply Theorem \ref{sparseturannumber1} with $v=e-q-1$.
  Since for every $1\le i\le e$,
  \begin{equation*}
    \begin{aligned}
      &ir-\lceil\fr{(i-1)(er-e+q+1)}{e-1}\rceil=ir-(i-1)(r-1)-\lceil\fr{(i-1)(r+q)}{e-1}\rceil\ge i-q-1,\\
    \end{aligned}
  \end{equation*}
so there exists an $r$-graph with $\Omega(n^{\fr{er-e+q+1}{e-1}}(\log n)^{\fr{1}{e-1}})$ edges, which is  $\mr{G}_r(i-q-1,i)$-free for every $1\le i\le e$, as needed.
\end{proof}

\subsection{$r$-graphs with no short Berge cycles}

\noindent For integers $t\ge 3,r\ge 3$, a {\it Berge $t$-cycle} in an $r$-graph is a set of $t$ distinct vertices $v_1,\ldots,v_t$ associated with $t$ distinct\footnote{In the literature, some authors (see, e.g. \cite{Lazebnikgirth5}) require the edges in a Berge cycle to be distinct, while others (see, e.g. \cite{Jacquesbergesurvey}) do not. However, it is easy to show that if there are at least two distinct edges in the cycle, then a Berge cycle {\it without} distinctness contains a Berge cycle {\it with} distinctness. Since in this paper we only consider the length of a shortest Berge cycle, the definition with distinctness is more suitable for us.}
edges $A_1,\ldots,A_t$ such that $\{v_{i-1},v_i\}\s A_i$ for $2\le i\le t$ and $\{v_1,v_t\}\s A_1$.
An $r$-graph is said to be {\it $\mr{B}_t$-free} if it contains no Berge cycles of length at most $t$.
For $t=3$, the results of \cite{Ruzsa-Szemeredi,EPR} implied that for any $r\ge3$, $n^{2-o(1)}<\e_r(n,\mr{B}_3)=o(n^2).$
For $t=4$, it was shown in \cite{Lazebnikgirth5} that for $r=3$, $\e_3(n,\mr{B}_4)=\Theta(n^{\fr{3}{2}})$,
and in \cite{TimmonsGirth4} that for any $r\ge 4$, $\e_r(n,\mr{B}_4)>n^{\fr{3}{2}-o(1)}.$
Recently, Xing and Yuan \cite{LRC} used $\mr{B}_t$-free $r$-graphs to construct optimal Locally Recoverable
Codes (see Subsection \ref{application-LRC} below) and they showed that (using the alteration method) for any $r\ge 3$ and $t\ge 5$,
$$\e_r(n,\mr{B}_t)=\Omega(n^{\fr{t}{t-1}}).$$
It is not hard to verify (see, e.g. Theorem 5.1 in \cite{LRC}) that an $r$-graph $\ma{H}$ is $\mr{B}_t$-free if and only if it is simultaneously $\mr{G}_r(ir-i,i)$-free for every $1\le i\le t$. Thus applying Theorem \ref{sparseturannumber1} with $v=tr-t$ and $e=t$ leads to the following result.

\begin{proposition}\label{universeconsequence2}
  For fixed integers $r\ge 3,t\ge 5$,
  $$\e_r(n,\mr{B}_t)=\Omega(n^{\fr{t}{t-1}}(\log n)^{\fr{1}{t-1}})$$
  as $n\rightarrow\infty$; or equivalently, there exists an $r$-graph with such number of edges, which is simultaneously $\mr{G}_r(ir-i,i)$-free for every $1\le i\le t$.
\end{proposition}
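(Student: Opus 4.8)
The plan is to derive Proposition~\ref{universeconsequence2} as an immediate corollary of Theorem~\ref{sparseturannumber1}, exactly in the spirit of the proof of Proposition~\ref{universeconsequence1}. First I would recall the equivalence, already noted in the excerpt (see Theorem~5.1 in \cite{LRC}), that an $r$-graph is $\mr{B}_t$-free if and only if it is simultaneously $\mr{G}_r(ir-i,i)$-free for every $1\le i\le t$. So it suffices to produce an $r$-graph on $n$ vertices with $\Omega(n^{\fr{t}{t-1}}(\log n)^{\fr{1}{t-1}})$ edges that is $\mr{G}_r(ir-i,i)$-free for each $1\le i\le t$. The cases $i=1$ (no loops, automatic since the hypergraph is simple) and $i=t$ (this is the $\mr{G}_r(v,e)$-free conclusion itself) will need a brief word, while the genuinely substantive range is $2\le i\le t-1$, which is precisely what Theorem~\ref{sparseturannumber1} delivers.

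The concrete step is to apply Theorem~\ref{sparseturannumber1} with $e=t$ and $v=tr-t$. I would first check the hypotheses: we need $r\ge3$ (given), $e=t\ge3$ (given, as $t\ge5$), $v=tr-t\ge r+1$ (clear for $r\ge3,t\ge5$), and the coprimality condition $\gcd(e-1,er-v)=\gcd(t-1,tr-(tr-t))=\gcd(t-1,t)=1$, which always holds. Thus the theorem produces an $r$-graph with $\Omega(n^{\fr{er-v}{e-1}}(\log n)^{\fr{1}{e-1}})=\Omega(n^{\fr{tr-(tr-t)}{t-1}}(\log n)^{\fr{1}{t-1}})=\Omega(n^{\fr{t}{t-1}}(\log n)^{\fr{1}{t-1}})$ edges, which is $\mr{G}_r(ir-\lceil\fr{(i-1)(er-v)}{e-1}\rceil,i)$-free for every $2\le i\le e=t$. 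Now I compute the parameter in the $\mr{G}_r$ bound: $ir-\lceil\fr{(i-1)(er-v)}{e-1}\rceil=ir-\lceil\fr{(i-1)t}{t-1}\rceil$. Since $\fr{(i-1)t}{t-1}=(i-1)+\fr{i-1}{t-1}$ and $0<\fr{i-1}{t-1}\le 1$ for $2\le i\le t$ (with equality only at $i=t$), we get $\lceil\fr{(i-1)t}{t-1}\rceil=(i-1)+1=i$, so $ir-\lceil\fr{(i-1)t}{t-1}\rceil=ir-i$. Hence the constructed hypergraph is $\mr{G}_r(ir-i,i)$-free for every $2\le i\le t$, which together with the trivial $i=1$ case gives the full range $1\le i\le t$, and the proposition follows.

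I do not anticipate a real obstacle here: everything reduces to the ceiling computation $\lceil(i-1)t/(t-1)\rceil=i$ for $2\le i\le t$ and the verification that $\gcd(t-1,t)=1$, both of which are routine. The only point requiring mild care is the boundary case $i=t$, where $(i-1)t/(t-1)=t$ is an integer so $\lceil\cdot\rceil=t$ and $ir-i=tr-t=v$, consistent with the ``in particular'' clause of Theorem~\ref{sparseturannumber1}; and the $i=1$ case, where $\mr{G}_r(r-1,1)$-freeness just says no single edge spans at most $r-1$ vertices, automatically true for an $r$-uniform hypergraph. I would state the proof in two or three lines mirroring the proof of Proposition~\ref{universeconsequence1}, citing the $\mr{B}_t$/$\mr{G}_r$ equivalence and then invoking Theorem~\ref{sparseturannumber1} with the displayed parameter choice.
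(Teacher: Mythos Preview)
Your proposal is correct and matches the paper's approach exactly: the paper simply says ``applying Theorem \ref{sparseturannumber1} with $v=tr-t$ and $e=t$ leads to the following result,'' and you have spelled out the verification of the hypotheses and the ceiling computation $\lceil (i-1)t/(t-1)\rceil=i$ that makes this work.
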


We remark that in \cite{LRC} the authors stated that in a private communication, Jacques Verstra\"ete suggested that a lower bound on $\e_r(n,\mr{B}_t)$, which is exactly the same with Proposition \ref{universeconsequence2}, can also be proved by using the method of \cite{BohmanKeevashr=2,BohmanKeevashr=3} (which is rather involved).
Nevertheless, since \cite{LRC} stated this result (as well as Proposition \ref{xing-yuan} below) without a proof, we present it here as an easy consequence of Theorem \ref{sparseturannumber1}.

\section{Applications to coding theory}\label{application-2}

\noindent In this section we present three applications of Theorem \ref{sparseturannumber1} to coding theory.
\subsection{Parent-Identifying Set Systems}

\noindent
An $r$-graph $\ma{H}\s\binom{[n]}{r}$ is said to be a {\it $t$-Parent-Identifying Set System} ($t$-IPPS for short),
denoted as $t$-${\rm{IPPS}}(r,|\ma{H}|,n)$, if for any $r$-subset $X\s[n]$ which is contained in the union of at most $t$ edges of $\ma{H}$, it holds that
$$\cap_{\ma{P}\in P_t(X)}\ma{P}\neq\emptyset,$$ where $P_t(X)=\{\ma{P}\s\ma{H}:|\ma{P}|\le t,~X\s\cup_{A\in\ma{P}}A\}$. 

IPPSs were introduced by Collins \cite{Collins09} as a technique to trace traitors in a secret sharing scheme.
Generally speaking, an {\it $(n,r)$-threshold secret sharing scheme}
has one {\it message} and $n$ {\it keys} 
such that any set of at least $r$ keys can be used to decrypt this message but no set of fewer than $r$ keys can. Let $\ma{H}$ be a $t$-${\rm{IPPS}}(r,m,n)$ whose $n$ vertices and $m$ edges are indexed by the $n$ keys and the $m$ users, respectively.
Assume that there is a {\it data supplier} distributes the keys to the users such that
for $1\le i\le m$, the $i$th user 
gets
the $r$ keys 
which form the $i$th edge of $\ma{H}$.
Suppose a coalition of at most $t$ illegal users may collude by combining some of their keys to produce a new, unauthorized set $T$ of $r$ keys to decrypt this message. Then, by definition of a $t$-IPPS, upon capturing an unauthorized set $T$,
the data supplier is able to identify at least one illegal user who contributed to $T$.


For a $t$-${\rm{IPPS}}(r,m,n)$ with given $t,r$ and $n$, it was shown
by Gu and Miao \cite{Gu2018} that $$m=O(n^{\lc\fr{r}{\lf t^2/4\rf+t}\rc}).$$
Recently, Gu, Cheng, Kabatiansky and Miao \cite{Gu} showed that for fixed integers $t\ge 2,r\ge 3$, there exists a $t$-${\rm{IPPS}}(r,m,n)$ with
$$m=\Omega(n^{\fr{r}{\lf t^2/4\rf+t}}),$$ 
which implies that for $\lf t^2/4\rf+t\mid r$ the
upper bound in \cite{Gu2018} is tight up to a constant factor.
We slightly improve the lower bound of \cite{Gu} for some pairs of $r,t$.

\begin{proposition}\label{ipps}
  For fixed integers $t\ge 2,r\ge 3$ satisfying $\gcd(\lf t^2/4\rf+t,r)=1$, there exists a $t$-${\rm{IPPS}}(r,m,n)$ with
  $$m=\Omega(n^{\fr{r}{\lf t^2/4\rf+t}}(\log n)^{\fr{1}{\lf t^2/4\rf+t}})$$ as $n\rightarrow\infty$.
\end{proposition}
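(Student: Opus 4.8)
The plan is to reduce Proposition \ref{ipps} to Theorem \ref{sparseturannumber1} by identifying the combinatorial structure that forces the IPPS property. The key observation, already implicit in the work of Gu, Cheng, Kabatiansky and Miao, is that an $r$-graph $\ma{H}$ is a $t$-IPPS provided it is sparse enough in the Brown--Erd\H{o}s--S\'os sense. More precisely, I would first establish the following statement: if $\ma{H}\s\binom{[n]}{r}$ is $\mr{G}_r(ir-\lc\fr{(i-1)(\lf t^2/4\rf+t+r)(\text{something})}{\cdots}\rc,i)$-free for the appropriate range of $i$ --- equivalently, if the union of any $i$ edges spans sufficiently many vertices --- then $\ma{H}$ is a $t$-IPPS. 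The mechanism is standard: if some $r$-set $X$ were contained in the union of at most $t$ edges but had $\cap_{\ma{P}\in P_t(X)}\ma{P}=\emptyset$, then one can extract two minimal covering sets $\ma{P}_1,\ma{P}_2$ of $X$ with $\ma{P}_1\cap\ma{P}_2=\emptyset$ (or disjoint in the relevant sense), and the union $\ma{P}_1\cup\ma{P}_2$ consists of at most $t$ (indeed at most $\lf t^2/4\rf + t$ after a more careful count, balancing $|\ma{P}_1|\le a$, $|\ma{P}_2|\le b$ with $a+b\le t$) edges whose union is too small --- it must contain $X$ twice over but with bounded overflow, contradicting sparseness. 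The parameter $\lf t^2/4\rf+t$ arises from maximizing $ab+a+b$ subject to $a+b\le t$, which is attained at $a=\lf t/2\rf$, $b=\lc t/2\rc$.

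Having pinned down the exact Brown--Erd\H{o}s--S\'os parameters $(v,e)$ that guarantee the $t$-IPPS property --- I expect $e=\lf t^2/4\rf+t$ and $v$ chosen so that $er-v=r$, i.e. $v=(\lf t^2/4\rf+t-1)r$, matching the exponent $\fr{er-v}{e-1}=\fr{r}{\lf t^2/4\rf+t-1}$... wait, the target exponent is $\fr{r}{\lf t^2/4\rf+t}$, so in fact $e-1=\lf t^2/4\rf+t$ and $er-v=r$, giving $e=\lf t^2/4\rf+t+1$ and $v=(\lf t^2/4\rf+t)r$ --- the coprimality hypothesis $\gcd(\lf t^2/4\rf+t,r)=1$ in the proposition is exactly the condition $\gcd(e-1,er-v)=1$ of Theorem \ref{sparseturannumber1}. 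I would then simply invoke Theorem \ref{sparseturannumber1} with these values of $r,e,v$: it produces an $r$-graph on $n$ vertices with $\Omega(n^{r/(\lf t^2/4\rf+t)}(\log n)^{1/(\lf t^2/4\rf+t)})$ edges which is $\mr{G}_r(ir-\lc\fr{(i-1)r}{\lf t^2/4\rf+t}\rc,i)$-free for every $2\le i\le e$. The remaining task is a purely arithmetic verification that this simultaneous sparseness, for all the relevant intermediate $i$, is strong enough to imply the $t$-IPPS property via the mechanism of the previous paragraph --- one needs to check that whenever $X$ is covered by $a+b\le t$ edges in two "disjoint" ways, the relevant union of $i\le \lf t^2/4\rf+t$ edges does indeed span at most $ir-\lc\fr{(i-1)r}{\lf t^2/4\rf+t}\rc$ vertices, so that the hypergraph's sparseness rules it out.

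The main obstacle is the combinatorial reduction in the first paragraph: getting the bookkeeping exactly right so that the failure of the IPPS condition is witnessed by a configuration of precisely the type that Theorem \ref{sparseturannumber1} forbids, rather than by something slightly larger or smaller. In particular one must be careful that the two covering families $\ma{P}_1,\ma{P}_2$ can be taken to have no common edge (otherwise that common edge lies in the intersection and there is no violation), and one must track how much the union of $\ma{P}_1\cup\ma{P}_2$ can overshoot $|X|=r$; the worst case analysis that yields the extremal value $\lf t^2/4\rf+t$ is exactly the content of the lower-bound construction in \cite{Gu}, so I would follow their argument for this step and only supply the --- now routine --- substitution into Theorem \ref{sparseturannumber1} for the improved $(\log n)^{1/(\lf t^2/4\rf+t)}$ factor. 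Everything else (the probabilistic construction, the independent-set extraction) is already packaged inside Theorem \ref{sparseturannumber1} and needs no repetition.
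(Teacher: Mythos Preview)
Your overall architecture is right: you correctly identify $e=\lf t^2/4\rf+t+1$, $v=(e-1)r$, so that $er-v=r$ and $\gcd(e-1,er-v)=\gcd(\lf t^2/4\rf+t,r)$ matches the hypothesis, and then you invoke Theorem \ref{sparseturannumber1}. That part is fine and coincides with the paper.

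The gap is in your combinatorial reduction. You assert that if the IPPS condition fails one can find \emph{two} covering families $\ma{P}_1,\ma{P}_2\in P_t(X)$ with $\ma{P}_1\cap\ma{P}_2=\emptyset$, and you justify this by saying ``otherwise that common edge lies in the intersection and there is no violation''. This is false: $\cap_{\ma{P}\in P_t(X)}\ma{P}=\emptyset$ does not force any two members of $P_t(X)$ to be disjoint. For instance $P_t(X)$ could be $\{\{A,B\},\{B,C\},\{A,C\}\}$, with pairwise intersections all nonempty but global intersection empty. Consequently your derivation of the bound $\lf t^2/4\rf+t$ via ``maximizing $ab+a+b$ subject to $a+b\le t$'' has no footing: with only two disjoint covers of sizes $a,b\le t$ you would get $a+b\le 2t$ edges, and the expression $ab+a+b$ does not arise.

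The paper's Lemma \ref{link} fixes exactly this: take the \emph{minimal} $m$ for which some $\ma{P}_1,\ldots,\ma{P}_m\in P_t(X)$ have empty intersection. Minimality gives, for each $i$, an edge $A_i\in\cap_{j\neq i}\ma{P}_j\setminus\ma{P}_i$; the $A_i$ are distinct, and each $\ma{P}_i$ already contains $m-1$ of them, whence $|\cup_i\ma{P}_i|\le m+m(t-m+1)\le\lf(t/2+1)^2\rf=e$. One then checks every $x\in X$ lies in at least two edges of $\cup_i\ma{P}_i$, pads to exactly $e$ edges, and obtains a violation of the \emph{single} condition $\mr{G}_r(er-r,e)$-free. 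In particular you do not need the simultaneous $\mr{G}_r(ir-\lc(i-1)r/(e-1)\rc,i)$-freeness for intermediate $i$ that you worry about; only the case $i=e$ is used, which simplifies the ``purely arithmetic verification'' you anticipated into nothing.
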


Proposition \ref{ipps} is proved by establishing a connection between IPPSs and sparse hypergraphs, as stated below.
Note that a similar observation with different phrasing was obtained independently in \cite{Gu}.

\begin{lemma}\label{link}
  Assume that $\ma{H}\s\binom{[n]}{r}$ is a $\mr{G}_r(er-r,e)$-free $r$-graph with $e=\lf(t/2+1)^2\rf$. Then it is also
  a $t$-${\rm{IPPS}}(r,|\ma{H}|,n)$.
\end{lemma}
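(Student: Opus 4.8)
The plan is to argue by contraposition: I will assume that $\ma{H}$ is \emph{not} a $t$-${\rm{IPPS}}(r,|\ma{H}|,n)$, and produce from this failure a copy of some member of $\mr{G}_r(er-r,e)$ inside $\ma{H}$, where $e=\lf(t/2+1)^2\rf$. By definition, the failure means there is an $r$-subset $X\s[n]$ contained in $\bigcup_{A\in\ma{P}}A$ for some $\ma{P}\s\ma{H}$ with $|\ma{P}|\le t$, such that $\bigcap_{\ma{P}'\in P_t(X)}\ma{P}'=\emptyset$; that is, no single edge of $\ma{H}$ lies in every member of $P_t(X)$. The first step is to extract from $P_t(X)$ a small family of "covering configurations" $\ma{P}_1,\ldots,\ma{P}_k\in P_t(X)$ (each of size at most $t$, each covering $X$) with the property that $\bigcap_{j=1}^k \ma{P}_j=\emptyset$; in fact, since the intersection over the whole of $P_t(X)$ is empty, I can greedily choose a subfamily of size $k\le r+1$ or so that already has empty intersection, because $X$ has only $r$ elements and each element of $X$ can be "blamed" on removing at most one configuration. (This is the standard sunflower-free / covering-design style pigeonholing; I will need to be a little careful about the exact bookkeeping here — see below.)

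The second step is the counting heart of the argument. Consider the union $\ma{Q}=\ma{P}_1\cup\cdots\cup\ma{P}_k$ of all edges appearing in these configurations; this is a set of, say, $m$ edges of $\ma{H}$, and I want to show $m\le e$ while $|\bigcup_{A\in\ma{Q}}A|\le er-r$, which exhibits the forbidden configuration. The bound $m\le e=\lf(t/2+1)^2\rf$ should come from the fact that two configurations covering the same $r$-set $X$, each of size $\le t/2+1$ (after a further reduction — one can always shrink a covering $\ma{P}_j$ to one with $\le \lceil ?\rceil$ edges, and the two "halves" $\le t/2$ and $\le t/2$ interact), cannot be too disjoint: if $\ma{P}_1$ and $\ma{P}_2$ both cover $X$ and share no edge, then $|\ma{P}_1\cup\ma{P}_2|\le t$ but — crucially — the union $\bigcup_{A\in\ma{P}_1\cup\ma{P}_2}A$ contains $X$ twice, forcing a vertex deficiency. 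Quantitatively, if $|\ma{P}_1|=a$ and $|\ma{P}_2|=b$ then $\bigcup_{A\in\ma{P}_1}A$ and $\bigcup_{A\in\ma{P}_2}A$ each have size $\le ar$ and $\le br$ but both contain the $r$-set $X$, so the union of all $a+b$ edges has at most $ar+br-r$ vertices; the extremal arithmetic $ab$ versus $a+b$ with $a+b\le t$ is maximized at $a=b=t/2$, giving the $\lf(t/2+1)^2\rf$ shape. The subtle point is that I do not want just two configurations but the whole family $\ma{P}_1,\ldots,\ma{P}_k$, and I must track how many \emph{distinct} edges appear in total and how the shared copies of $X$ accumulate vertex savings; I expect the clean way is to pass to a minimal pair (or minimal sub-collection) realizing the empty-intersection obstruction and then invoke the two-configuration estimate.

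The third step is purely bookkeeping: having produced $m\le e$ distinct edges of $\ma{H}$ spanning at most $er-r$ vertices, I note that $\mr{G}_r(er-r,e)$-freeness of $\ma{H}$ means the union of any $e$ (hence any $m\le e$, after padding with arbitrary extra edges, which only adds $\le r$ vertices per edge and stays within $er-r$ since we started strictly below) edges spans at least $(er-r)+1$ vertices — contradiction. I should double-check the padding: if $m<e$, adjoin $e-m$ further edges of $\ma{H}$ to the configuration; the span grows by at most $(e-m)r$, so the total is at most $(mr-r)+(e-m)r = er-r$, still contradicting freeness. The main obstacle, as flagged, is getting the constant in step two exactly right — i.e. verifying that the worst case over all ways the at-most-$t$ edges of a failing witness split into two $X$-covering halves is precisely $\lf(t/2+1)^2\rf$, and confirming that reducing from the full family $P_t(X)$ to two (or a few) configurations does not cost more edges than this bound allows. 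Everything else (the greedy selection in step one, the padding in step three) is routine once that estimate is pinned down.
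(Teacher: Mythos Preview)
Your contrapositive framing, the ``each $x\in X$ is covered by at least two edges'' observation (yielding the $-r$ in the vertex count), and the padding step are all correct and match the paper. The gap is entirely in your step two, and it is a genuine one.

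You cannot in general reduce to two configurations. From $\bigcap_{\ma{P}\in P_t(X)}\ma{P}=\emptyset$ it does \emph{not} follow that some pair $\ma{P}_1,\ma{P}_2\in P_t(X)$ already has $\ma{P}_1\cap\ma{P}_2=\emptyset$; think of $P_t(X)\supseteq\{\{A,B\},\{B,C\},\{C,A\}\}$. (Incidentally, if a disjoint pair always existed you would get $|\ma{P}_1\cup\ma{P}_2|\le 2t$, which for $t\ge 2$ is at most $\lf(t/2+1)^2\rf$ and would prove a strictly stronger lemma --- so this route must fail.) Your suggestion to shrink each cover to size $\le t/2+1$ is also unsupported, and the greedy ``$k\le r+1$'' selection invokes a Helly-type property that does not hold here: the objects being intersected are sets of edges, not subsets of $X$.

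What the paper does instead is take a \emph{minimal} subfamily $\ma{P}_1,\ldots,\ma{P}_m\in P_t(X)$ with $\bigcap_{i=1}^m\ma{P}_i=\emptyset$. Minimality buys, for each $i$, an edge $A_i\in\bigcap_{j\neq i}\ma{P}_j$; these $A_i$ are pairwise distinct and $A_i\notin\ma{P}_i$. Thus every $\ma{P}_i$ contains the $m-1$ edges $\{A_j:j\neq i\}$, so $|\ma{P}_i\setminus\{A_1,\ldots,A_m\}|\le t-(m-1)$, and hence
\[
\Big|\bigcup_{i=1}^m\ma{P}_i\Big|\;\le\; m+\sum_{i=1}^m|\ma{P}_i\setminus\{A_1,\ldots,A_m\}|\;\le\; m+m(t-m+1)\;=\;m(t-m+2)\;\le\;\big\lfloor(t/2+1)^2\big\rfloor,
\]
the last inequality being the quadratic maximum in $m$. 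This is where the constant $\lf(t/2+1)^2\rf$ actually comes from --- it is an optimization over the number $m$ of configurations, not over the sizes $a,b$ of two configurations. Once you have this edge bound, your ``covered twice'' and padding arguments finish the proof exactly as you outlined.
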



\begin{proof}
\noindent  Assume towards contradiction that $\ma{H}$ is not a $t$-${\rm{IPPS}}(r,|\ma{H}|,n)$. Thus
  by definition there exists an $r$-subset $X\s[n]$, which can be covered by at most $t$ edges of $\ma{H}$, such that $\cap_{\ma{P}\in P_t(X)}\ma{P}=\emptyset$.
  Let $m$ be the minimal positive integer such that there exist $\ma{P}_1,\ldots,\ma{P}_m\in\ma{P}_t(X)$ with $\cap_{i=1}^m \ma{P}_i=\emptyset$.
  By the minimality of $m$, it holds that for each $i\in[m]$, $$\cap_{j\in[m]\setminus\{i\}} \ma{P}_j\neq\emptyset.$$
  Without loss of generality, assume $A_i\in\cap_{j\in[m]\setminus\{i\}} \ma{P}_j$.
  Clearly, $A_i\not\in\ma{P}_i$ and moreover, for $1\le i\neq i'\le m$, $A_i\neq A_{i'}$.
  Let $\ma{A}:=\{A_1,\ldots,A_m\}$, then
  \begin{equation*}
    \begin{aligned}
      |\cup_{i=1}^m\ma{P}_i|=&|\ma{A}|+|\cup_{i=1}^m(\ma{P}_i\setminus\ma{A})|\\
      \le&m+\sum_{i=1}^m|\ma{P}_i\setminus\ma{A}|\\
      \le&m+m(t-m+1)\\
      \le&\lf(t/2+1)^2\rf,\\
    \end{aligned}
  \end{equation*}
  where the second inequality follows since    $|\ma{P}_i|\le t$ and $|\ma{P}_i\cap\ma{A}|=m-1$ for any $i\in[m]$.

  Let $\ma{B}:=\cup_{i=1}^m\ma{P}_i$. We claim that for each $x\in X$ there exist at least two distinct sets $B_i,B_j\in\ma{B}$ that contain it.
  Assume the opposite, then there exist $x\in X$ and $B\in\ma{B}$, such that $x$ belongs solely to $B$ but to no other set in $\ma{B}$; that is,
  $x\in B$ and $x\not\in\cup_{B'\in\ma{B}\setminus\{B\}}B'$.  This implies that  $B \in\ma{P}_i$ for any $i$, and  $\cap_{i=1}^m\ma{P}_i\neq \emptyset$, a contradiction.

Add to $\ma{B}$ arbitrary $\lf(t/2+1)^2\rf-|\ma{B}|\geq 0$ edges of $\ma{H}\setminus\ma{B}$. It is clear now that $\ma{B}$ contains exactly $\lf(t/2+1)^2\rf$ edges of $\ma{H}$ and

    \begin{equation*}
    \begin{aligned}
      |\cup_{B\in\ma{B}}B|\leq \lf(t/2+1)^2\rf r -r,
    \end{aligned}
  \end{equation*}
  where the inequality follows since each element of $X$ appears in at least two edges of $\ma{B}$.
	This violates the $\mr{G}_r(er-r,e)$-freeness of $\ma{H}$ for $e=\lf(t/2+1)^2\rf$, and the result follows.
\end{proof}

\begin{proof}[\textbf{Proof of Proposition \ref{ipps}}]
  Apply Lemma \ref{link} and Theorem \ref{sparseturannumber1} with $v=er-r$ and $e=\lf(t/2+1)^2\rf$.
\end{proof}

\subsection{Uniform Combinatorial Batch Codes}\label{application-CBC}

\noindent
An {\it $r$-uniform CBC} with parameters $m,e,n$, denoted as $r{\text{-}}(m,e,n)$-CBC, is an $r$-uniform {\it multihypergraph} (i.e., hypergraphs allowing repeated edges) $\ma{H}$ with $n$ vertices and $m$ edges, such that for every $1\le i\le e$, the union of any $i$ distinct edges contains at least $i$ vertices.
For integers $e>r\ge 3$, let $m(n,r,e)$ denote the maximum $m$ such that an $r{\text{-}}(m,e,n)$-CBC exists.

Uniform-CBCs can be applied to
the following scenario in a distributed database system, as illustrated by Balachandran and Bhattacharya \cite{batch1}. Assume that there are $m$
{\it data items} which are stored in $n$ {\it servers} and any data item is replicated across $r$
servers so that any $e$ of the $m$ data items can be retrieved by accessing $e$ servers and reading  exactly one data item from each.
Let $\ma{H}\s\binom{[n]}{r}$ be an $r$-uniform multihypergraph whose $n$ vertices and $m$ edges are indexed by the servers and the data items, respectively. An {\it $\ma{H}$-based replication system} stores $m$ data items among $n$ servers as follows: 
for $1\le i\le m$, the $i$th data item is stored in the $r$ servers which form the $i$th edge of $\ma{H}$\footnote{Since two distinct data items may be stored in the same set of $r$ servers, $\ma{H}$ is
allowed to have repeated edges.}.

Given an $\ma{H}$-based replication system,
the required retrieval condition on the servers and the data items can be expressed in a purely combinatorial way: every collection of at most $e$ distinct edges of $\ma{H}$ has a {\it system of distinct representatives} (SDR for short) from the $n$ vertices, where for any $e$ edges $\ma{A}=\{A_1,\ldots,A_e\}\s\ma{H}$, an SDR of $\ma{A}$ is a set of $e$ {\it distinct} elements $\{x_1,\ldots,x_e\}\s[n]$ such that $x_i\in A_i$ for each $1\le i\le e$.
Applying Hall's theorem \cite{hall1935representatives} one can infer that this holds if and only if $\ma{H}$ is $\mr{G}_r(i-1,i)$-free for every $1\le i\le e$.

Recall that an $r$-graph is said to be $\mr{H}_r(q,e)$-free if it is simultaneously $\mr{G}_r(i-q-1,i)$-free for every $1\le i\le e$.
Clearly, an $r{\text{-}}(m,e,n)$-CBC is equivalent to an $\mr{H}_r(0,e)$-free $r$-uniform multihypergraph with $n$ vertices and $m$ edges; consequently,

\begin{equation*}\label{batch}
  \begin{aligned}
    m(n,r,e)\ge \e_r(n,\mr{H}_r(0,e)).
  \end{aligned}
\end{equation*}

\noindent For fixed integers $e> r\ge 3$, it was shown in \cite{paterson2009} that
$$m(n,r,e)=\Omega(n^{r-1+\fr{r}{e-1}}).$$

An easy application of Proposition \ref{universeconsequence1} suggests the following result.

\begin{proposition}\label{cbc}
For fixed integers $e> r\ge 3$ satisfying $\gcd(e-1,r)=1$, 
  $$m(n,r,e)\ge \e_r(n,\mr{H}_r(0,e))=\Omega(n^{r-1+\fr{r}{e-1}}(\log n)^{\fr{1}{e-1}})$$ as $n\rightarrow\infty$.
\end{proposition}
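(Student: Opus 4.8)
The plan is to deduce Proposition \ref{cbc} directly from Proposition \ref{universeconsequence1} by specializing the parameter $q$. Recall that an $r$-uniform CBC with parameters $m,e,n$ is equivalent to an $\mr{H}_r(0,e)$-free $r$-uniform multihypergraph on $n$ vertices with $m$ edges, and that by definition $\mr{H}_r(0,e)$-freeness is the same as being simultaneously $\mr{G}_r(i-1,i)$-free for every $1\le i\le e$. Since an ordinary (simple) $r$-graph is in particular a multihypergraph, any $\mr{H}_r(0,e)$-free simple $r$-graph witnesses the lower bound $m(n,r,e)\ge\e_r(n,\mr{H}_r(0,e))$, and this inequality is already recorded in the excerpt. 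So it suffices to lower bound $\e_r(n,\mr{H}_r(0,e))$.

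First I would invoke Proposition \ref{universeconsequence1} with $q=0$. The hypothesis there reads ``$r\ge 3,\ e>q+r\ge 3$ with $\gcd(e-1,r+q)=1$''; setting $q=0$ this becomes exactly ``$e>r\ge 3$ with $\gcd(e-1,r)=1$'', which is precisely the hypothesis of Proposition \ref{cbc}. The conclusion of Proposition \ref{universeconsequence1} then yields
$$\e_r(n,\mr{H}_r(0,e))=\Omega\!\left(n^{r-1+\fr{r}{e-1}}(\log n)^{\fr{1}{e-1}}\right)$$
as $n\to\infty$, which is the bound claimed. Chaining this with $m(n,r,e)\ge\e_r(n,\mr{H}_r(0,e))$ completes the proof.

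There is essentially no obstacle here: the argument is a one-line substitution $q=0$ into an already-proven proposition, together with the definitional chain CBC $\Leftrightarrow$ $\mr{H}_r(0,e)$-free multihypergraph and the trivial inclusion of simple $r$-graphs into multihypergraphs. The only thing one should double-check is that the exponent matches after substitution, namely that $r-1+\tfrac{r+q}{e-1}$ with $q=0$ equals $r-1+\tfrac{r}{e-1}$, which is immediate; and that Proposition \ref{universeconsequence1}'s underlying appeal to Theorem \ref{sparseturannumber1} (via $v=e-q-1=e-1$) is legitimate, i.e.\ that $v=e-1\ge r+1$, which follows from $e>r+1$ — note that the edge case $e=r+1$ forces $v=r$, excluded, but $\gcd(e-1,r)=\gcd(r,r)=r\neq 1$ anyway when $r\ge 2$, so the coprimality hypothesis already rules it out. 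Hence the statement follows cleanly, and the proof in the excerpt indeed just cites Proposition \ref{universeconsequence1}.
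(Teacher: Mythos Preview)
Your proposal is correct and matches the paper's approach exactly: the paper simply states that Proposition~\ref{cbc} is an easy application of Proposition~\ref{universeconsequence1}, which is precisely your substitution $q=0$. Your additional check that the edge case $e=r+1$ is excluded by the coprimality hypothesis is a nice sanity check that the paper leaves implicit.
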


\subsection{Optimal Locally Recoverable Codes}\label{application-LRC}
\noindent
A {\it linear code} $\ma{C}$ of length $n$ defined on the finite field $\mathbb{F}_q$ is a subspace of  $\mathbb{F}_q^n$. The {\it minimum distance} of $\ma{C}$ is defined as $d(\ma{C}):=\min\{{\rm{wt}}(\pmb{x}):\pmb{x}\in\ma{C}\setminus\{\pmb{0}\}\}$, where ${\rm{wt}}(\pmb{x})$ is the number of nonzero coordinates of $\pmb{x}$. A {\it parity check matrix} of $\ma{C}$ is an $(n-k)\times n$ matrix $H$ such that $\pmb{x}\in\ma{C}$ if only if $H\cdot \pmb{x}^T=\pmb{0}.$

A linear code $\ma{C}\subseteq\mathbb{F}_q^n$ of dimension $k$ is called {\it Locally Recoverable Code} (or LRC for short) with {\it locality} $r$ if for any $i\in[n]$ there exists  $r$ other coordinates $i_1,...,i_r$ such that for any codeword $\pmb{x}=(x_1,\ldots,x_n)\in\ma{C}$,  $x_i$ can be recovered from   $x_{i_1},\ldots,x_{i_r}$.
We denote such a code by $(n,k,r)$-LRC. In \cite{LocalSingleton} it was shown that the minimum distance $d$ of an $(n,k,r)$-LRC satisfies $d\le n-k-\lc\fr{k}{r}\rc+2,$ and the code is called {\it optimal} if the bound is achieved with equality.


In order to reduce the complexity of the operations in the  finite field, it is desirable to define the LRCs over small enough fields. In other words, given the size of the underlying field, our goal is to construct the longest possible optimal-LRC.

Assume that $r+1\mid n$. Set $m:=\frac{n}{r+1}$ and let $I_m$  and $\textbf{1}$ be the identity matrix of order $m$, and the all $1$ row vector of length $r+1$, respectively. It is not hard to verify that a linear code $ \ma{C}$ with   parity check matrix of the form
\begin{equation}
\label{eq:8}
H=\left(\begin{matrix}
I_m\otimes \textbf{1}\\
A\\
\end{matrix}\right),
\end{equation}
\noindent where $ \otimes$ is the Kronecker product and $A$ is an $(n-k-m)\times n$ matrix, has locality $r$. Indeed, any symbol $x_i,i\in [n]$ of a codeoword $\pmb{x}\in\ma{C}$  can be recovered by $r$ other symbols since it satisfies a linear equation which has exactly  $r+1$ variables.

%
Xing and Yuan \cite{LRC} gave a construction of an optimal LRC for $r\ge d-2$ by carefully constructing the matrix $A$ in \eqref{eq:8}, as follows.
%
For a subset $A=\{\alpha_1,...,\alpha_{r+1}\}\subseteq \mathbb{F}_q$, let $V(A)$ be the $(d-2)\times (r+1)$ Vandermonde matrix with $\alpha_j^i$ as its $(i,j)$-entry.
The following result  was proved in \cite{LRC}.
\begin{lemma}[see Theorem 3.1, \cite{LRC}]\label{LRCsparse}
Let  $d\ge 11$ and $r\ge d-2$, and  let $\ma{C}\s\mathbb{F}_q^n$ be a linear code with parity check matrix
\begin{equation*}
H=\left(\begin{matrix}
I_m\otimes \textbf{1}\\
V(A_1),...,V(A_m)\\
\end{matrix}\right),
\end{equation*}
%
%
 then $\ma{C}$ is an optimal $(n,k,r)$-LRC with minimum distance $d$ if and only if the family $\ma{A}:=\{A_1,\ldots,A_m\}\s\binom{\mathbb{F}_q}{r+1}$ is $\mr{G}_{r+1}(ir,i)$-free for each $1\le i\le \lf\fr{d-1}{2}\rf$.
\end{lemma}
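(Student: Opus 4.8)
The plan is to translate, by elementary linear algebra over $\mathbb{F}_q$, the assertion ``$\ma{C}$ is an optimal LRC of minimum distance $d$'' into a statement about linear dependences among the columns of $H$, and then to recognise such dependences as \emph{short Berge cycles} in $\ma{A}$, so that the conclusion follows from the criterion recalled above, namely that an $(r+1)$-graph is $\mr{B}_t$-free if and only if it is simultaneously $\mr{G}_{r+1}(i(r+1)-i,i)$-free for every $1\le i\le t$ (see Theorem~5.1 of~\cite{LRC}), together with the identity $i(r+1)-i=ir$. \textbf{First I would pin down the parameters of $\ma{C}$.} The $m$ rows of $I_m\otimes\textbf{1}$ are supported on $m$ pairwise disjoint groups of $r+1$ coordinates, so every coordinate lies in a weight-$(r+1)$ parity check and $\ma{C}$ has locality $r$. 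Moreover $H$ has full row rank $m+(d-2)$: a relation among the rows would force a polynomial $g(x)=\sum_{t=1}^{d-2}\lambda_tx^t$ of degree at most $d-2$ with no constant term to be constant on each $(r+1)$-set $A_j$, and since $r+1>d-2$ this makes $g\equiv0$ and all coefficients vanish. Hence $\dim\ma{C}=k:=n-m-(d-2)=mr-d+2$, and a direct computation shows that the generalised Singleton bound $n-k-\lc k/r\rc+2$ of~\cite{LocalSingleton} for an $(n,k,r)$-LRC equals $d$ (this is where the numerical hypotheses on $d$ and $r$ enter). Since that bound always holds, $\ma{C}$ is an optimal $(n,k,r)$-LRC of minimum distance $d$ \emph{if and only if} $d(\ma{C})\ge d$, i.e. if and only if every $d-1$ columns of $H$ are linearly independent.

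\textbf{Next I would analyse column dependences through a bipartite incidence graph.} Index the columns of $H$ by pairs $(j,\alpha)$ with $j\in[m]$ and $\alpha\in A_j$; the column $(j,\alpha)$ carries the unit vector $e_j\in\mathbb{F}_q^m$ in its top $m$ entries and $(\alpha,\alpha^2,\ld,\alpha^{d-2})$ in its bottom $d-2$ entries. Suppose $d(\ma{C})<d$ and let $S$ be a \emph{minimal} linearly dependent set of columns; its dependence $(c_{j,\alpha})_{(j,\alpha)\in S}$ is unique up to scaling and, by minimality of $S$, has no zero coordinate. Reading the top $m$ equations, for every block $j$ met by $S$ one gets $\sum_{\alpha:(j,\alpha)\in S}c_{j,\alpha}=0$; in particular each such block is met in at least two coordinates (a block met in a single coordinate would force its coefficient to vanish). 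The bottom $d-2$ equations say $\sum_{\gamma}c(\gamma)\gamma^{t}=0$ for $1\le t\le d-2$, where $c(\gamma):=\sum_{j:(j,\gamma)\in S}c_{j,\gamma}$ and $\gamma$ runs over the (at most $|S|\le d-1$) distinct elements of $\mathbb{F}_q$ occurring in $S$, while summing the per-block relations gives $\sum_\gamma c(\gamma)=0$; together these are $d-1$ Vandermonde relations on at most $d-1$ values, so $c(\gamma)=0$ for every $\gamma$. Thus $(c_{j,\alpha})$ is a nonzero element of the cycle space of the bipartite incidence graph $G$ whose two sides are the blocks met by $S$ and the distinct field elements occurring in $S$, with one edge per column of $S$; therefore $G$ contains a cycle.

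\textbf{Then I would identify cycles with Berge cycles.} A cycle of length $2g$ in $G$ alternates between $g$ distinct blocks $A_{j_1},\ld,A_{j_g}$ and $g$ distinct field elements $v_1,\ld,v_g$ with $\{v_{t-1},v_t\}\s A_{j_t}$ for all $t$ (indices modulo $g$) --- precisely a Berge $g$-cycle of $\ma{A}$ --- and it uses $2g$ columns of $S$, so $2g\le|S|\le d-1$ and $g\le\lf(d-1)/2\rf$. Conversely, a Berge $g$-cycle of $\ma{A}$ with $g\le\lf(d-1)/2\rf$ yields the $2g$ pairwise distinct columns $(j_t,v_{t-1}),(j_t,v_t)$ (the blocks are distinct and $v_{t-1}\neq v_t$), whose incidence graph is a single $2g$-cycle; any nonzero element of its cycle space is then a dependence among these columns, producing a codeword of weight at most $2g\le d-1<d$. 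Hence $d(\ma{C})\ge d$ if and only if $\ma{A}$ contains no Berge cycle of length at most $\lf(d-1)/2\rf$, i.e. $\ma{A}$ is $\mr{B}_t$-free with $t=\lf(d-1)/2\rf$. Combined with the reduction of the first paragraph and the $\mr{B}_t$-freeness criterion quoted there, this is exactly the assertion that $\ma{A}$ is $\mr{G}_{r+1}(ir,i)$-free for every $1\le i\le\lf(d-1)/2\rf$.

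\textbf{The main obstacle} is the second step. Everything hinges on two properties of a minimal dependent set of at most $d-1$ columns: that it meets each of its blocks in at least two coordinates, so that the problem genuinely lives on a bipartite graph with one edge per column; and that it involves at most $d-1$ distinct field elements, so that the $d-1$ Vandermonde relations --- the $d-2$ ``global'' ones together with the sum of the per-block ones --- annihilate every $c(\gamma)$ and collapse the dependence space to the cycle space of $G$. The remainder is bookkeeping: verifying that columns of $S$ correspond bijectively to edges of $G$, and that the $2g$ columns built from a Berge cycle in the converse direction are genuinely distinct; both use only that the blocks are pairwise distinct and that the elements within a block are distinct. Once these are settled, the first and third steps are routine, given the statements of the generalised Singleton bound and of the Berge-cycle characterisation.
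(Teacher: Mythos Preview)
The paper does not give its own proof of this lemma; it is quoted from \cite{LRC} (Theorem~3.1 there) and used as a black box. So there is no in-paper proof to compare against, and your proposal supplies what the present paper omits. The strategy you describe --- reading a minimal column dependence of $H$ as a nonzero element of the cycle space of the bipartite ``block $\times$ field-element'' incidence graph, and then identifying a bipartite $2g$-cycle with a Berge $g$-cycle in $\ma{A}$ --- is sound and is essentially the argument of \cite{LRC}. The decisive step, that the $d-2$ Vandermonde rows together with the sum of the per-block relations give $d-1$ Vandermonde constraints on at most $d-1$ distinct field elements and hence annihilate every aggregated coefficient $c(\gamma)$, is correct and is indeed where the whole proof lives.

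One small caveat you glossed over with ``this is where the numerical hypotheses on $d$ and $r$ enter'': at the endpoint $r=d-2$ one has $k=mr-(d-2)=(m-1)(d-2)$, so $\lceil k/r\rceil=m-1$ and the Singleton-type bound $n-k-\lceil k/r\rceil+2$ evaluates to $d+1$, not $d$. Thus the identification ``optimal $\Leftrightarrow$ minimum distance $d$'' in your first paragraph literally requires $r\ge d-1$; at $r=d-2$ the phrase ``optimal with minimum distance $d$'' is internally inconsistent. This is a wrinkle in the lemma as stated rather than a defect in your argument, and for the application (Proposition~\ref{xing-yuan}) only the implication ``$\mr{G}_{r+1}(ir,i)$-free for all $i\le\lfloor(d-1)/2\rfloor$ $\Rightarrow$ $d(\ma{C})\ge d$'' is used, which your cycle-space construction establishes without any such caveat.
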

The following result (which is stated in \cite{LRC} without a  proof) follows by combining Lemma \ref{LRCsparse} and Proposition \ref{universeconsequence2}.

\begin{proposition}[see also Theorem 1.1, \cite{LRC}]\label{xing-yuan}
  Suppose that $d\ge 11$, $r\ge d-2$ and $r+1\mid n$, then there exists an optimal $(n,k,r)$-LRC over $\mathbb{F}_q$ with minimum distance $d$ and length $n=\Omega\big(q(q\log q )^{\fr{1}{\lf(d-3)/2\rf}}\big).$
\end{proposition}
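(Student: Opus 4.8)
The plan is to derive Proposition \ref{xing-yuan} directly from Lemma \ref{LRCsparse} and Proposition \ref{universeconsequence2}, treating $\mathbb{F}_q$ as the vertex set. Set $R:=r+1$, so that $R\ge d-1\ge 10$, and put $t:=\lfloor\frac{d-1}{2}\rfloor$, which satisfies $t\ge 5$ since $d\ge 11$. By Proposition \ref{universeconsequence2} applied with the uniformity $R\ge 3$ and cycle length $t\ge 5$, there exists an $R$-graph $\ma{A}\s\binom{[q]}{R}$ (identifying $[q]$ with $\mathbb{F}_q$) with $|\ma{A}|=\Omega\big(q^{\fr{t}{t-1}}(\log q)^{\fr{1}{t-1}}\big)$ edges that is simultaneously $\mr{G}_R(iR-i,i)$-free for every $1\le i\le t$. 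In the notation of Lemma \ref{LRCsparse} this is exactly the condition that $\ma{A}=\{A_1,\ldots,A_m\}$ be $\mr{G}_{r+1}(ir,i)$-free for each $1\le i\le\lfloor\frac{d-1}{2}\rfloor$, once we note $iR-i=i(r+1)-i=ir$.

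Next I would set $m:=|\ma{A}|$ and $n:=R\cdot m=(r+1)m$, which automatically guarantees $r+1\mid n$, and apply Lemma \ref{LRCsparse} (using the hypotheses $d\ge 11$ and $r\ge d-2$, which are exactly those of the lemma): the linear code $\ma{C}\s\mathbb{F}_q^n$ with parity check matrix built from the Vandermonde blocks $V(A_1),\ldots,V(A_m)$ is an optimal $(n,k,r)$-LRC with minimum distance $d$. It only remains to translate the edge count into a length bound: since $n=(r+1)m=\Theta(m)$ with $r,d$ fixed constants, we have
\begin{equation*}
  n=\Omega\big(q^{\fr{t}{t-1}}(\log q)^{\fr{1}{t-1}}\big)=\Omega\big(q\cdot q^{\fr{1}{t-1}}(\log q)^{\fr{1}{t-1}}\big)=\Omega\big(q\,(q\log q)^{\fr{1}{t-1}}\big),
\end{equation*}
and since $t-1=\lfloor\frac{d-1}{2}\rfloor-1=\lfloor\frac{d-3}{2}\rfloor$, this is precisely $n=\Omega\big(q(q\log q)^{\fr{1}{\lf(d-3)/2\rf}}\big)$, as claimed.

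There is essentially no obstacle here beyond bookkeeping: the substance is entirely contained in Lemma \ref{LRCsparse} (proved in \cite{LRC}) and in Proposition \ref{universeconsequence2} (which we have already derived from Theorem \ref{sparseturannumber1}). The only point requiring a moment of care is the arithmetic identity $iR-i=ir$ with $R=r+1$, ensuring that the family of forbidden configurations $\mr{G}_{r+1}(ir,i)$ demanded by Lemma \ref{LRCsparse} matches verbatim the family $\mr{G}_R(iR-i,i)$ produced by Proposition \ref{universeconsequence2}, and the floor identity $\lfloor\frac{d-1}{2}\rfloor-1=\lfloor\frac{d-3}{2}\rfloor$ in the exponent. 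One should also observe that the hypothesis $r\ge 3$ needed for Proposition \ref{universeconsequence2} is implied by $r\ge d-2\ge 9$, and that $t\ge 5$ follows from $d\ge 11$, so all invoked results apply without extra assumptions.
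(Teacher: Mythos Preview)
Your proposal is correct and follows exactly the approach indicated in the paper, which merely states that the result follows by combining Lemma \ref{LRCsparse} and Proposition \ref{universeconsequence2}. Your careful verification of the parameter matching (in particular $i(r+1)-i=ir$ and $\lfloor\frac{d-1}{2}\rfloor-1=\lfloor\frac{d-3}{2}\rfloor$) and of the hypotheses of the cited results is precisely the bookkeeping the paper leaves implicit.
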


\section*{Acknowledgements}

\noindent The research of Chong Shangguan and Itzhak Tamo was supported by ISF grant No. 1030/15 and NSF-BSF grant No. 2015814.
The authors would like to thank Prof. Yiwei Zhang for valuable comments on the first version of this manuscript.
They are also grateful to Yujie Gu for helpful discussions on Parent-Identifying Set Systems. Lastly, the authors want to express their gratitude to the two anonymous reviewers for their comments which are very helpful to the improvement of this paper.

{\small
\bibliographystyle{plain}
\bibliography{turan}
}
 \end{document}